\documentclass[psamsfonts]{amsart}

\usepackage{amssymb,amsfonts}
\usepackage[all,arc]{xy}
\usepackage{enumerate}
\usepackage{mathrsfs}
\usepackage{graphicx}

\usepackage[mathscr]{euscript}

\usepackage{orcidlink}

\usepackage{booktabs}
\usepackage{threeparttable}

\newtheorem{theorem}{Theorem}[section]

\newtheorem{proposition}[theorem]{Proposition}
\newtheorem{lemma}[theorem]{Lemma}

\theoremstyle{definition}
\newtheorem{definition}[theorem]{Definition}

\newtheorem{example}[theorem]{Example}
\newtheorem{examples}[theorem]{Examples}

\newtheorem{remark}[theorem]{Remark}

\DeclareMathOperator{\ord}{ord}
\DeclareMathOperator{\scl}{scl}
\newcommand{\Id}{\operatorname{Id}}

\numberwithin{equation}{section}

\begin{document}
	
\title[On the subtractive closure operator in hemirings]{On the distributivity of the subtractive closure operator over ideal operations in hemirings}
		
\author{Peyman Nasehpour\orcidlink{0000-0001-6625-364X}}
	
\address{Peyman Nasehpour\\
Education Department\\
The New York Academy of Sciences\\
New York, NY, USA}
\email{nasehpour@gmail.com}

\subjclass[2020]{Primary 16Y60; Secondary 06A15, 06B10, 13A15}

\keywords{subtractive closure operator, subtractive ideals, Kuratowski hemirings, Nucleus hemirings, distributivity over ideal operations}
	
\maketitle
	
\begin{abstract}
	In this paper, we investigate the subtractive closure operator in commutative hemiring theory. While this operator satisfies standard closure properties, it does not distribute over ideal addition, intersection, or multiplication in general. We introduce Kuratowski hemirings (where closure preserves joins) and nucleus hemirings (where closure preserves meets), providing explicit counterexamples and positive results for each. Key findings include: the standard semiring $\mathbb{N}_0$ and the max-plus algebra are nucleus hemirings with multiplicative distributivity; the interval semiring $T = [0,1]$ is Kuratowski and multiplicatively distributive but not nucleus; and polynomial hemirings $S_0[X,Y,Z]$ over zerosumfree hemirings form a broad class of non-nucleus counterexamples. We also correct a recent claim regarding distributivity over arbitrary intersections.
\end{abstract}
	
\section{Introduction}

Subtractive ideals are fundamental objects in the multiplicative ideal theory of hemirings, playing a pivotal role in extending classical results from commutative ring theory to the hemiring setting (see, for example, \cite{Golan1999, Nasehpour2016, Nasehpour2019, Nasehpour2018}). Recall that an ideal $I$ of a hemiring $H$ is called \emph{subtractive} if it satisfies the following condition:
\[
a+b, a \in I \implies b \in I, \qquad \forall\, a,b \in H.
\]
Accordingly, characterizing the smallest subtractive ideal of a hemiring $H$ containing a subset $X \subseteq H$---the subtractive closure of $X$---is of central importance to this structural program. For any subset $X$ of a hemiring $H$, its subtractive closure, denoted by $\scl_H(X)$, is defined as the smallest subtractive ideal containing $X$. Although classic properties such as extensivity, idempotence, and monotonicity are inherited from standard closure operators (cf. Theorem \ref{subtractiveclosureproperties}), the subtractive closure operator fails, in general, to distribute over ideal addition, intersection, or multiplication. The primary objective of this paper is to provide explicit counterexamples illustrating these distributive failures and to delineate the precise structural conditions under which distributivity is preserved.
	
Since the terminology in hemiring theory is not yet fully standardized \cite[p.~3]{Glazek2002}, we begin by fixing our notation and terminology. Furthermore, we briefly summarize the main results obtained herein.
	
A bimagma $(H,+, \cdot)$ is called a hemiring if $(H,+)$ is a commutative monoid with identity element $0$, $(H, \cdot)$ is a semigroup, multiplication distributes over addition from both sides, and $0$ is multiplicatively absorbing (i.e., $h0 = 0h = 0$ for all $h \in H$). A hemiring $H$ is commutative if $ab = ba$ for all $a,b \in H$; throughout this paper, all hemirings are assumed to be commutative. A hemiring is called a semiring if it possesses a multiplicative identity element $1 \neq 0$ \cite[p.~1]{Golan1999}. A hemiring $H$ is zerosumfree if $a+b=0$ implies $a=b=0$ for all $a,b \in H$, and entire if $ab=0$ implies $a=0$ or $b=0$ for all $a,b \in H$. A semiring that is both zerosumfree and entire is called an information algebra \cite[p.~4]{Golan1999}.

A nonempty subset $I$ of a hemiring $H$ is called an ideal (denoted by $I \trianglelefteq H$) if $a+b \in I$ and $ha \in I$ for all $a,b \in I$ and $h \in H$ \cite[p.~65]{Golan1999}. The set of all ideals of a hemiring $H$ is denoted by $\Id(H)$. An ideal $I$ is proper if $I \neq H$, and subtractive if $x+y \in I$ and $x \in I$ imply $y \in I$ for all $x,y \in H$ \cite[p.~66]{Golan1999}.
	
In \S\ref{sec:subtractiveclosure}, we recall closure operators and verify that the subtractive closure $\scl_H$ satisfies the standard axioms. We also obtain the explicit description $\scl_H(I)=\{h \in H : h+i \in I \text{ for some } i \in I\}$ and establish that $\scl_H$ is super-distributive over addition and multiplication, but sub-distributive over intersection (Theorem \ref{subtractiveclosureproperties}).
	
The structural core of our investigation begins in \S\ref{sec:kuratowski}, where we address the failure of full distributivity of subtractive closure over ideal addition by introducing and discussing the class of Kuratowski hemirings---those for which the subtractive closure operator behaves as a finite lattice dilation. To demonstrate that this distributive property is highly non-trivial, we establish a sharp structural dichotomy: it fails even for the prototypical standard semiring $\mathbb{N}_0$, whereas it holds for the ``continuous interval'' semiring $T = [0,1]$ introduced in \cite{Nasehpour2016} (cf. Theorem \ref{NasehpourTsemiring}). Beyond specific examples, we identify broad algebraic environments that are inherently Kuratowski, including subtractive, austere, or uniserial hemirings (Propositions \ref{austereisKuratowski} and \ref{uniserialisKuratowski}). Finally, to explore the behavior of this operator on the opposite extreme of fully subtractive settings, we define porous hemirings and prove that $T$ serves as an explicit archetype of a semiring that is simultaneously porous and Kuratowski (Theorem \ref{aporousKuratowskisemiring}).
	
The focus shifts to intersections of ideals in \S\ref{sec:nucleus}, where we formalize the behavior of subtractive closures over intersections by introducing the class of nucleus hemirings. We establish that intersection-distributivity is not a universal property; specifically, Theorem \ref{nonnucleusthm} provides an explicit counterexample using the interval semiring $T = [0,1]$, while Appendix \ref{appendix:polynomial} constructs an independent counterexample via graded polynomial hemirings. Crucially, these parallel constructions rectify a recent assertion by Goswami and Dube \cite[Lemma 2.1(7)]{GoswamiDube2024} that the subtractive closure operator distributes over arbitrary intersections, showing instead that equality can fail even in finite settings. To delineate the boundaries where this property remains valid, we prove that several broad structural environments---including subtractive, austere uniform, and uniserial hemirings---are inherently nucleus (Propositions \ref{subtractiveisnucleus}, \ref{austereuniformisnucleus}, and \ref{uniserialisnucleus}). Finally, we demonstrate that the foundational canonical semiring $\mathbb{N}_0$ satisfies this condition as well (Theorem \ref{Nnucleus}).
	
Finally, in \S\ref{sec:sclnotdistributiveidealmultiplication}, we explore the distributivity of the subtractive closure over ideal multiplication. Although Example \ref{subtractiveoperatornodistributemultiplication} (due to Dulin and Mosher \cite[Example~8]{DulinMosher1972}) demonstrates that this property does not hold in general, we show that entire austere hemirings do satisfy it (Proposition \ref{austereandentiremultiplicativedistributivity}). Furthermore, we prove that the interval semiring $T = [0,1]$ (Theorem \ref{NasehpourTsemiringdistribute}), the standard semiring $\mathbb{N}_0$ (Theorem \ref{Ndistributesmultiplicationthm}), and the max-plus algebra (Theorem \ref{maxplusalgebraideals}) all enjoy this distributive property.

\section{Closure operators and the subtractive closure}\label{sec:subtractiveclosure}

We begin by recalling the definition of a closure operator. Let $(P,\le)$ be a partially ordered set. A mapping $c \colon P \to P$ is called a closure operator on $P$ (see \S7.1 of \cite{DaveyPriestley2002}) if, for all $x,y \in P$, the following conditions hold:

\begin{description}
	\item[(CL1)] Extensivity: $x \le c(x)$;
	\item[(CL2)] Monotonicity: if $x \le y$, then $c(x) \le c(y)$;
	\item[(CL3)] Idempotence: $c(c(x)) = c(x)$.
\end{description}

\begin{proposition}\label{subsuperdistributivepro}
Let $c$ be a closure operator on a partially ordered set $(P,\le)$. Then the following statements hold:

\begin{enumerate}
	\item If $(P,\wedge)$ is a meet-semilattice, then $c$ is sub-distributive on $(P,\wedge)$; that is, $c(x \wedge y) \le c(x) \wedge c(y)$, for all $x,y \in P$.
	\item If $(P,\vee)$ is a join-semilattice, then $c$ is super-distributive on $(P, \vee)$; that is, $c(x) \vee c(y) \le c(x \vee y)$, for all $x,y \in P$.
\end{enumerate}

\end{proposition}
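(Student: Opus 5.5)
Both parts follow from monotonicity (CL2) together with the universal properties of meets and joins; neither extensivity nor idempotence is needed. The plan is to handle each part in two short steps.

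For part (1), assume $(P,\wedge)$ is a meet-semilattice and fix $x,y \in P$. Since $x \wedge y$ is a lower bound of $\{x,y\}$, we have $x \wedge y \le x$ and $x \wedge y \le y$. Applying (CL2) to each inequality gives $c(x \wedge y) \le c(x)$ and $c(x \wedge y) \le c(y)$. Thus $c(x \wedge y)$ is a lower bound of $\{c(x), c(y)\}$. Because $c(x) \wedge c(y)$ is the greatest such lower bound, we conclude $c(x \wedge y) \le c(x) \wedge c(y)$.

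For part (2), the argument is dual. Assume $(P,\vee)$ is a join-semilattice. From $x \le x \vee y$ and $y \le x \vee y$, (CL2) yields $c(x) \le c(x \vee y)$ and $c(y) \le c(x \vee y)$. So $c(x \vee y)$ is an upper bound of $\{c(x),c(y)\}$. Since $c(x) \vee c(y)$ is the least upper bound, we get $c(x) \vee c(y) \le c(x \vee y)$.

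I do not expect a genuine obstacle. The only care needed is that each semilattice hypothesis guarantees the relevant meet or join exists in $P$. In particular, $c(x) \wedge c(y)$ in part (1), and $c(x) \vee c(y)$ in part (2), are taken in $P$ itself, not in the image of $c$. So the universal property is applied in the same poset in which $c$ takes its values.
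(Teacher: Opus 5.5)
Your proof is correct and follows the paper's argument: apply monotonicity (CL2) to $x \wedge y \le x$ and $x \wedge y \le y$, then use the universal property of the meet. The only difference is that the paper dismisses part (2) by order duality, whereas you write that dual argument out explicitly.
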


\begin{proof}
To prove the first statement, let $x, y \in P$. By the definition of the meet operation, $x \wedge y \le x$ and $x \wedge y \le y$. Applying the monotonicity of the closure operator (CL2), it follows that $c(x \wedge y) \le c(x)$ and $c(x \wedge y) \le c(y)$. By the universal property of the meet, this yields: \[c(x \wedge y) \le c(x) \wedge c(y).\] The second statement regarding join-semilattices follows immediately by order duality and is therefore omitted.
\end{proof}

\begin{proposition}\label{intersectionsubtractiveideals}
	An arbitrary intersection of subtractive ideals is subtractive.
\end{proposition}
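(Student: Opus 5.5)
The plan is to verify the two defining conditions directly for the intersection. Let $\{I_\lambda\}_{\lambda \in \Lambda}$ be a family of subtractive ideals of $H$, and set $I = \bigcap_{\lambda \in \Lambda} I_\lambda$. If $\Lambda$ is empty, the intersection is conventionally $H$ itself. Since $H$ is trivially a subtractive ideal, we may assume $\Lambda \neq \emptyset$.

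\textbf{$I$ is an ideal.} Each $I_\lambda$ is an ideal, so $0 \in I_\lambda$: pick any $a \in I_\lambda$ and note $0 = 0a \in I_\lambda$ because $0$ is multiplicatively absorbing. Hence $0 \in I$, and $I$ is nonempty. Now take $a, b \in I$ and $h \in H$. For each $\lambda$, we have $a, b \in I_\lambda$, so $a+b \in I_\lambda$ and $ha \in I_\lambda$. Since this holds for every $\lambda$, both $a+b$ and $ha$ lie in $I$.

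\textbf{$I$ is subtractive.} Suppose $x \in I$ and $x+y \in I$. Then for every $\lambda$ we have $x \in I_\lambda$ and $x + y \in I_\lambda$. Subtractivity of $I_\lambda$ gives $y \in I_\lambda$, and since $\lambda$ was arbitrary, $y \in I$.

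There is no real obstacle here. The only point needing care is nonemptiness, which is handled by observing that every ideal contains $0$, together with the convention for the empty intersection. This result is what guarantees that $\scl_H(X)$, defined as the smallest subtractive ideal containing $X$, is well defined: it is the intersection of all subtractive ideals containing $X$, a family that is nonempty because it contains $H$.
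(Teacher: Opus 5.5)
Your proof is correct and is exactly the routine direct verification the paper has in mind when it calls this ``straightforward'': closure of the intersection under addition and under multiplication by elements of $H$, together with subtractivity checked in each $I_\lambda$. Your treatment of nonemptiness via $0 = 0a$ and of the empty family is extra care that the paper leaves implicit.
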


\begin{proof}
Straightforward.
\end{proof}

For a hemiring $H$, we let $\Id_s(H)$ denote the set of all subtractive ideals of $H$. In view of Proposition \ref{intersectionsubtractiveideals}, we give the following definition.

\begin{definition}\label{subtractiveclosureidealdef}
	Let $X$ be a subset of a hemiring $H$. The subtractive ideal 
	\[ \scl_H(X) = \bigcap_{X \subseteq I \in \Id_s(H)} I \] 
	is called the subtractive closure of $X$.
\end{definition}
	
\begin{theorem}\label{subtractiveclosureproperties}
	Let $H$ be a hemiring and $\mathcal{P}(H)$ denote its power set. Then $\scl_H$ is a closure operator on $\mathcal{P}(H)$. Moreover, the following hold for all ideals $I$ and $J$ of $H$:
	\begin{enumerate}[\rm (a)]
		\item $\scl_H(I)=\{h \in H : h+i \in I \text{ for some } i \in I\}$.
		\item $\scl_H$ is super-distributive on $(\Id(H),+)$, i.e., $\scl_H(I)+\scl_H(J)\subseteq \scl_H(I+J)$.
		\item $\scl_H$ is sub-distributive on $(\Id(H),\cap)$, i.e., $\scl_H(I \cap J)\subseteq \scl_H(I)\cap \scl_H(J)$.
		\item $\scl_H$ is super-distributive on $(\Id(H),\cdot)$, i.e., $\scl_H(I) \scl_H(J) \subseteq \scl_H(IJ)$.
	\end{enumerate}
\end{theorem}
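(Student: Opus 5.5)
First I will check that $\scl_H$ is a closure operator on $\mathcal{P}(H)$, ordered by inclusion. The intersection in Definition \ref{subtractiveclosureidealdef} is over a nonempty family, since $H$ itself is a subtractive ideal containing $X$. By Proposition \ref{intersectionsubtractiveideals} this intersection is a subtractive ideal. Extensivity (CL1) is immediate because every member of the family contains $X$. For monotonicity (CL2), if $X \subseteq Y$ then every subtractive ideal containing $Y$ also contains $X$, so the family for $Y$ is a subfamily of the family for $X$ and hence $\scl_H(X)\subseteq \scl_H(Y)$. For idempotence (CL3), $\scl_H(X)$ is itself a subtractive ideal containing $\scl_H(X)$, so it is the least such and $\scl_H(\scl_H(X))=\scl_H(X)$.

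For (a), let $I$ be an ideal and put $K=\{h\in H : h+i\in I \text{ for some } i\in I\}$. I will show that $K$ is a subtractive ideal containing $I$ and that $K$ lies in every subtractive ideal containing $I$.
\begin{enumerate}[\rm (i)]
\item $I\subseteq K$: take $i=0$.
\item $K$ lies in every subtractive ideal $J\supseteq I$: if $h+i\in I\subseteq J$ with $i\in I\subseteq J$, then $h\in J$.
\item $K$ is closed under addition: if $h+i,\ h'+i'\in I$, then $(h+h')+(i+i')\in I$ with $i+i'\in I$.
\item $K$ absorbs multiplication: $r(h+i)=rh+ri\in I$ with $ri\in I$.
\item $K$ is subtractive: suppose $x\in K$ with $x+i\in I$, and $x+y\in K$ with $x+y+j\in I$. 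Then $y+(x+i+j)=(x+y+j)+i\in I$, and $x+i+j\in I$, so $y\in K$.
\end{enumerate}
The subtractivity step (v) is the only point needing a trick, namely absorbing $x$ into the witness $x+i+j$. Hence $K=\scl_H(I)$.

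Parts (b) and (c) follow from Proposition \ref{subsuperdistributivepro}, applied to the restriction of $\scl_H$ to $\Id(H)$. This restriction is a closure operator on $\Id(H)$ because $\scl_H$ of an ideal is again an ideal. Moreover, $\Id(H)$ is a lattice with join $I+J$ and meet $I\cap J$.
\begin{itemize}
\item For (b), one can also argue directly. We have $I\subseteq I+J$, so $\scl_H(I)\subseteq\scl_H(I+J)$ by monotonicity, and likewise $\scl_H(J)\subseteq\scl_H(I+J)$. Since $\scl_H(I+J)$ is closed under addition, the sum $\scl_H(I)+\scl_H(J)$ lies in it.
\item For (c), use monotonicity of $\scl_H$ on $I\cap J\subseteq I$ and $I\cap J\subseteq J$.
\end{itemize}

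For (d), I use the description in (a). Recall that $IJ$ consists of finite sums of products $ij$, so it suffices to show $xy\in\scl_H(IJ)$ for $x\in\scl_H(I)$ and $y\in\scl_H(J)$, because $\scl_H(IJ)$ is an ideal. Choose $i\in I$ and $j\in J$ with $x+i\in I$ and $y+j\in J$. Then
\[ (x+i)(y+j)=xy+(xj+iy+ij)\in IJ. \]
Here $xj$, $iy$ and $ij$ all lie in $IJ$, since $I$ and $J$ are ideals, so the witness $xj+iy+ij$ lies in $IJ$. By (a) this gives $xy\in\scl_H(IJ)$.
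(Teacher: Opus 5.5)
Your treatment of the closure axioms and of parts (a)--(c) is correct and follows the paper; your subtractivity trick in (a)(v) is exactly the paper's. The problem is in (d). The sentence ``$xj$, $iy$ and $ij$ all lie in $IJ$, since $I$ and $J$ are ideals'' is false: only $ij$ is guaranteed to be in $IJ$. Since $x$ is merely in $\scl_H(I)$, the product $xj$ lies in $J$ but need not lie in $IJ$, and likewise for $iy$. Nor does the sum $xj+iy+ij$ lie in $IJ$ in general, so your appeal to (a) with this witness is invalid.

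Concretely, in $\mathbb{N}_0$ take $I=J=\langle 3,5\rangle=\mathbb{N}_0\setminus\{1,2,4,7\}$, which is an additive submonoid and hence an ideal. Take $x=y=1$ and $i=j=5$, so that $x+i=y+j=6$. Then $xy+(xj+iy+ij)=36=6\cdot 6\in IJ$, but your witness is $35$. The nonzero products $ab$ with $a\in I$, $b\in J$ and $ab\le 35$ are $9,15,18,24,25,27,30,33$, and no sum of these equals $35$. Hence $35\notin IJ$.

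The repair is short, and there are two ways to do it. What you can legitimately place in $IJ$ are $ij$, $(x+i)j=xj+ij$ and $i(y+j)=iy+ij$.

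\textbf{First option.} Use the witness $w=(x+i)j+i(y+j)=xj+iy+2ij\in IJ$. Then $xy+w=(x+i)(y+j)+ij\in IJ$, and (a) gives $xy\in\scl_H(IJ)$.

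\textbf{Second option.} Argue as the paper does, using subtractivity of $\scl_H(IJ)$ instead of the description (a) for $IJ$. From $(x+i)j,\ ij\in IJ$ you get $xj\in\scl_H(IJ)$, and symmetrically $iy\in\scl_H(IJ)$. Hence $xj+iy+ij\in\scl_H(IJ)$. Subtracting this from $(x+i)(y+j)\in IJ$ then yields $xy\in\scl_H(IJ)$.

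The point is that the witness only needs to lie in the subtractive ideal $\scl_H(IJ)$, not in $IJ$ itself.
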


\begin{proof}
	That $\scl_H$ is a closure operator follows immediately from its definition and Proposition \ref{intersectionsubtractiveideals}.
	
	\smallskip
	\noindent (a) Let $A = \{h \in H : h+i \in I \text{ for some } i \in I\}$. It is straightforward to verify that $A$ is an ideal of $H$ containing $I$. To show $A$ is subtractive, let $a+b, a \in A$. Then $(a+b)+i_1 \in I$ and $a+i_2 \in I$ for some $i_1, i_2 \in I$. Since $I$ is an ideal, it contains $b + (a+i_2) + i_1 = (a+b) + i_1 + i_2$. Because $(a+i_2)+i_1 \in I$, it follows that $b \in A$. 
	
	Conversely, if $K$ is any subtractive ideal containing $I$, then for any $h \in A$, we have $h+i \in I \subseteq K$ for some $i \in I \subseteq K$. Subtractivity of $K$ implies $h \in K$, yielding $A \subseteq K$. Hence, $A = \scl_H(I)$.
	
	\smallskip
	\noindent (b) In light of Proposition \ref{subsuperdistributivepro}, this statement holds because $(\Id(H),+)$ is a join-semilattice.
	
	\smallskip
	\noindent (c) Similarly, this holds because $(\Id(H),\cap)$ is a meet-semilattice.
	
	\smallskip
	\noindent (d) Let $a \in \scl_H(I)$ and $b \in \scl_H(J)$. By (a), there exist $i_1, i_2 \in I$ and $j_1, j_2 \in J$ such that $a + i_1 = i_2$ and $b + j_1 = j_2$. Multiplying gives
	\[
	i_2 j_2 = ab + aj_1 + bi_1 + i_1 j_1 \in IJ \subseteq \scl_H(IJ).
	\]
	Since $i_2 j_1 \in IJ$ and $i_2 j_1 = (a + i_1)j_1 = aj_1 + i_1 j_1 \in IJ$, subtractivity of $\scl_H(IJ)$ yields $aj_1 \in \scl_H(IJ)$. Symmetrically, $bi_1 \in \scl_H(IJ)$. Now because $aj_1 + bi_1 + i_1 j_1 \in \scl_H(IJ)$ and $i_2 j_2 \in \scl_H(IJ)$, subtractivity again gives $ab \in \scl_H(IJ)$. Hence, $\scl_H(I) \scl_H(J) \subseteq \scl_H(IJ)$.
\end{proof}

\begin{remark}
	Theorem~\ref{subtractiveclosureproperties} establishes half of the full distributivity condition $c(a * b) = c(a) * c(b)$ for ideal operations under $\scl_H$: it is sub-distributive over intersection, and super-distributive over addition and multiplication. The remainder of this paper investigates conditions under which the respective reverse inclusions hold to achieve full distributivity.
\end{remark}

\section{Kuratowski semirings}\label{sec:kuratowski}

\begin{definition}\label{kuratowskimapdef}
	Let $(P, \le)$ be a join-semilattice with a least element $0$. We call a function $k \colon P \to P$ a Kuratowski map \cite[p. 180]{Sikorski1964} if the following conditions hold for all $x, y \in P$:
	\begin{description}
		\item[(CL1)] $x \le k(x)$,
		\item[(CL3)] $k(k(x)) = k(x)$,
		\item[(K1)] $k(0) = 0$,
		\item[(K2)] $k(x \vee y) = k(x) \vee k(y)$.
	\end{description}
\end{definition}

\begin{remark}\label{Kuratowskiimpliesmonotonicity}
	Every Kuratowski map is a closure operator. Indeed, if $x \le y$, then $x \vee y = y$. Applying \rm{(K2)} yields $k(x) \le k(x) \vee k(y) = k(x \vee y) = k(y)$, which establishes the required monotonicity \rm{(CL2)}. Furthermore, if $P$ has a greatest element $1$, then $k(1)=1$ follows trivially from \rm{(CL1)}. Finally, conditions \rm{(K1)} and \rm{(K2)} ensure that $k$ behaves precisely as a finite morphological dilation on $P$ \cite{BanonBarrera1993}.
\end{remark}
	
\begin{example}\label{NnotKuratowski}
	The subtractive closure $\scl_{\mathbb{N}_0}$ on the semiring $\mathbb{N}_0$ is not a Kuratowski map. Consider the subtractive ideals $I=2\mathbb{N}_0$ and $J=3\mathbb{N}_0$. Their sum $I+J = \mathbb{N}_0\setminus\{1\}$ is not subtractive (since $1+2 \in I+J$ and $2 \in I+J$, yet $1 \notin I+J$). Because $\scl_{\mathbb{N}_0}(I+J) = \mathbb{N}_0$, we obtain the strict inclusion:
	\[ \scl_{\mathbb{N}_0}(I) + \scl_{\mathbb{N}_0}(J) = I + J \subset \scl_{\mathbb{N}_0}(I+J). \]
\end{example}

In light of Theorem~\ref{subtractiveclosureproperties} and Example~\ref{NnotKuratowski}, we introduce the following definition.

\begin{definition}\label{def:kuratowski_hemiring}
	We call a hemiring $H$ a Kuratowski hemiring if the subtractive closure operator $\scl_H$ is a Kuratowski map on the lattice of ideals of $H$.
\end{definition}

\begin{proposition}\label{austereisKuratowski}
	Every subtractive hemiring and every austere hemiring is Kuratowski.
\end{proposition}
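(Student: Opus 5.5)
The plan is to verify the four axioms of Definition~\ref{kuratowskimapdef} for $\scl_H$ on the lattice $(\Id(H),+)$. Its least element is $\{0\}$ and its join is ideal addition. Axioms (CL1) and (CL3) already hold by Theorem~\ref{subtractiveclosureproperties}, since $\scl_H$ is a closure operator. Its restriction to $\Id(H)$ lands in $\Id(H)$, because $\scl_H(I)$ is an ideal.

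For (K1), I would observe that $\{0\}$ is always a subtractive ideal. Indeed, if $x \in \{0\}$ and $x+y \in \{0\}$, then $y = 0 + y = 0$. Hence $\scl_H(\{0\}) = \{0\}$ in every hemiring, with no hypothesis needed.

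It therefore remains to check (K2), namely $\scl_H(I+J) = \scl_H(I)+\scl_H(J)$. The inclusion $\supseteq$ is Theorem~\ref{subtractiveclosureproperties}(b), so only the inclusion $\subseteq$ needs work.

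\emph{Subtractive case.} Here every ideal is subtractive, so $\scl_H(K)=K$ for all $K \in \Id(H)$. Then
\[
\scl_H(I+J) = I+J = \scl_H(I)+\scl_H(J)
\]
holds immediately.

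\emph{Austere case.} I would use the standard definition (Golan): $H$ is austere when its only subtractive ideals are $\{0\}$ and $H$. Combined with (K1), this gives $\scl_H(K)=\{0\}$ if $K=\{0\}$ and $\scl_H(K)=H$ otherwise. I would then split into two cases.
\begin{enumerate}
\item If $I=J=\{0\}$, then $I+J=\{0\}$, and both sides of (K2) equal $\{0\}$.
\item Otherwise, say $I \neq \{0\}$. Then $I+J \supseteq I$ is nonzero, so $\scl_H(I+J)=H$. Also $\scl_H(I)+\scl_H(J) \supseteq \scl_H(I) = H$, so both sides equal $H$.
\end{enumerate}

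There is no genuine obstacle in this argument. The only point requiring care is pinning down the definition of ``austere'', which the paper has not yet stated. I would also note explicitly that $\{0\}$ is always subtractive, so the austere condition really means that every nonzero subtractive ideal is all of $H$.
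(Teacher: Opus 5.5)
Your proposal is correct and follows essentially the same route as the paper: in the subtractive case $\scl_H$ is the identity, and in the austere case you split into $I=J=\{0\}$ versus at least one nonzero ideal, using that austerity forces $\scl_H(K)=H$ for every nonzero $K$. Your explicit checks of (K1), via the subtractivity of $\{0\}$, and of (CL1)/(CL3) fill in details that the paper leaves implicit.
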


\begin{proof}
	If $H$ is a subtractive hemiring, then $\scl_H$ is the identity map, making it trivially Kuratowski. Now, let $I$ and $J$ be ideals of an austere hemiring $H$. If $I = J = \{0\}$, the result holds trivially. Otherwise, if at least one of $I$ or $J$ is nonzero, then $I+J \neq \{0\}$. Because $H$ is austere, the subtractive closure of any nonzero ideal is $H$ itself. This immediately yields $\scl_H(I+J) = H$ and $\scl_H(I) + \scl_H(J) = H$, establishing the equality.
\end{proof}

\begin{examples} 
	We provide examples of subtractive and austere semirings:
	\begin{enumerate}
		\item Any bounded distributive lattice $(L,\vee,\wedge,0,1)$ forms a subtractive semiring.
		\item For each $k \in \mathbb{N}$, the indigenous semiring $S_k$ \cite[Definition 4.3]{BehzadipourKoppelaarNasehpour2025} is austere \cite[Theorem 4.4(5)]{BehzadipourKoppelaarNasehpour2025}.
		\item For any information algebra $A$ and join-semilattice $L$, the overflow semiring $S = A \uplus_{\ord} L$ is austere \cite[Proposition 3.1]{Nasehpour2026}.
	\end{enumerate} 
\end{examples}

Recall that a hemiring $H$ is uniserial if its ideals are linearly ordered by inclusion \cite[Definition 2.2]{BehzadipourNasehpour2023}. 

\begin{proposition}\label{uniserialisKuratowski}
	Every uniserial hemiring is Kuratowski.
\end{proposition}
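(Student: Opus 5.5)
Since the ideals of a uniserial hemiring $H$ form a chain under inclusion, I will first record that for any ideals $I,J$ of $H$ we have, without loss of generality, $I\subseteq J$, and hence $I+J=J$. (Indeed $J\subseteq I+J$ since $0\in I$, and $I+J\subseteq J+J=J$ since $J$ is closed under addition.) The key step is then the distributivity condition (K2). By monotonicity of the closure operator $\scl_H$ (Theorem \ref{subtractiveclosureproperties}), $I\subseteq J$ gives $\scl_H(I)\subseteq \scl_H(J)$. Since $\scl_H(I)$ and $\scl_H(J)$ are themselves ideals of $H$, the same argument yields $\scl_H(I)+\scl_H(J)=\scl_H(J)=\scl_H(I+J)$. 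Note that only comparability of $I$ and $J$ is used. The sum of two comparable ideals is the larger one, so the linear order on all of $\Id(H)$ is more than we need.

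It remains to check the other axioms of Definition \ref{kuratowskimapdef} on the join-semilattice $(\Id(H),+)$ with least element $\{0\}$. Extensivity (CL1) and idempotence (CL3) hold because $\scl_H$ is a closure operator (Theorem \ref{subtractiveclosureproperties}). For (K1), I will observe that $\{0\}$ is a subtractive ideal. If $a+b=0$ and $a=0$, then $b=0$. Hence $\scl_H(\{0\})=\{0\}$, which is immediate also from the description in Theorem \ref{subtractiveclosureproperties}(a).

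I do not expect any genuine obstacle. The only point requiring care is to justify that ideal sum coincides with the larger ideal, which uses $0\in I$ and closure of $J$ under addition. Everything else reduces to monotonicity and the earlier Theorem \ref{subtractiveclosureproperties}.
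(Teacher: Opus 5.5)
Your proof is correct and follows the paper's argument: reduce to $I\subseteq J$, so $I+J=J$, then use monotonicity to get $\scl_H(I)+\scl_H(J)=\scl_H(J)=\scl_H(I+J)$. Your explicit checks of (K1), (CL1), (CL3) and of $I+J=J$ are points the paper leaves implicit, and they are correct.
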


\begin{proof}
	Let $H$ be a uniserial hemiring. For any ideals $I, J \in \Id(H)$, we may assume without loss of generality that $I \subseteq J$. It follows that $I + J = J$, which yields $\scl_H(I + J) = \scl_H(J)$. Furthermore, monotonicity ensures $\scl_H(I) \subseteq \scl_H(J)$, implying $\scl_H(I) + \scl_H(J) = \scl_H(J)$. Equating the two results gives $\scl_H(I + J) = \scl_H(I) + \scl_H(J)$.
\end{proof}

Motivated by the fact that subtractive and austere hemirings are Kuratowski, we introduce a term for hemirings at the opposite end of this spectrum.

\begin{definition}\label{def:porous_hemiring}
We call	a hemiring $H$ porous if it contains a non-subtractive ideal; equivalently, there exists an ideal $I$ of $H$ such that $\scl_H(I) \neq I$.
\end{definition}

\begin{theorem}\label{NasehpourTsemiring}
	Let $T = [0,1]$ be the semiring equipped with addition $a \oplus b = \max(a,b)$ and multiplication defined by $a \odot b = 0$ for all $a,b \in [0,1)$, with $1 \odot a = a \odot 1 = a$ for all $a \in T$ \cite[Propositions 20 and 21]{Nasehpour2016}. The following statements hold:
	\begin{enumerate}[\rm (1)]
		\item $\Id(T) = \{T\} \cup \{ I \subseteq [0, 1) : 0 \in I \}$.
		\item An ideal $I \in \Id(T)$ is subtractive if and only if $I = T$, $I = [0, c]$ for $c \in [0, 1)$, or $I = [0, c)$ for $c \in (0, 1]$.
		\item For any ideal $I \in \Id(T)$, its subtractive closure is given by:
		\[
		\scl_T(I) = 
		\begin{cases} 
			[0, \sup I], & \text{if } \sup I \in I, \\ 
			[0, \sup I), & \text{if } \sup I \notin I. 
		\end{cases}
		\]
	\end{enumerate}
\end{theorem}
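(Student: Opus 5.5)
The plan is to prove (1), (2) and (3) in order, each resting on one structural fact about $T$: $1$ is the only element with nonzero products. Indeed $a\odot b=0$ whenever $a,b<1$.

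\textbf{Part (1).} Let $I$ be an ideal. If $1\in I$, then $a=a\odot 1\in I$ for every $a\in T$, so $I=T$. Otherwise $I\subseteq[0,1)$, and $0\in I$ because $0=0\odot x$ for any $x\in I$ and $I$ is nonempty. Conversely, let $I\subseteq[0,1)$ with $0\in I$. It is closed under $\oplus$, since $\max(a,b)\in\{a,b\}$. It absorbs multiplication, since for $a\in I$ we have $h\odot a=0\in I$ when $h<1$ and $1\odot a=a$. So every such subset is an ideal, and $T$ is one trivially.

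\textbf{Part (3), computed before (2).} I will use Theorem~\ref{subtractiveclosureproperties}(a):
\[
\scl_T(I)=\{h : \max(h,i)\in I \text{ for some } i\in I\}.
\]
For $I=T$ there is nothing to prove. For $I\subseteq[0,1)$, write $s=\sup I$.
\begin{itemize}
\item Take $h\le$ some $i\in I$. Then $\max(h,i)=i\in I$, so $h\in\scl_T(I)$.
\item Take $h$ exceeding every element of $I$. Then $\max(h,i)=h\notin I$, so $h\notin\scl_T(I)$.
\end{itemize}
Hence $\scl_T(I)=\{h : h\le i \text{ for some } i\in I\}$, the down-closure of $I$. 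This is $[0,s]$ when $s\in I$ and $[0,s)$ when $s\notin I$; the latter covers $s=1$, which never lies in $I$.

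\textbf{Part (2).} An ideal is subtractive if and only if it equals its own closure. By (3), the proper subtractive ideals are therefore exactly the down-closed subsets of $[0,1)$ containing $0$. These are the intervals $[0,c]$ with $c<1$ and $[0,c)$ with $0<c\le1$. Adding $T$ itself, which is trivially subtractive, gives the stated list.

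The only step needing care is the precise case split in (3) on whether $\sup I$ is attained. Within that step, the one delicate point is $\sup I=1$: since $1\notin I$ there, this case must be recorded as $[0,1)$.
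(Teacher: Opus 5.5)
Your proof is correct and follows essentially the same route as the paper: both use Theorem~\ref{subtractiveclosureproperties}(a) to identify $\scl_T(I)$ with the down-closure ${\downarrow}I$ in this totally ordered max-addition setting, and then split on whether $\sup I$ is attained. The only differences are cosmetic. The paper proves (2) directly (subtractive $\iff$ down-closed) before (3), whereas you derive (2) from (3) via ``subtractive iff equal to its own closure''; you also make explicit why every ideal contains $0$.
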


\begin{proof}
	(1) If $1 \in I$, then $t \odot 1 = t \in I$ for all $t \in T$, so $I = T$. If $1 \notin I$, any subset $I \subseteq [0,1)$ containing $0$ is automatically closed under addition because $\max(a,b) \in \{a,b\} \subseteq I$. It absorbs multiplication from $T$ since $t \odot a = 0 \in I$ for all $t < 1$, and $1 \odot a = a \in I$.
	
\smallskip \noindent (2) An ideal $I$ is subtractive if $\max(a,b) \in I$ and $a \in I \implies b \in I$. If $b \le a$, then $\max(a,b) = a \in I$, forcing $b \in I$. Thus, $I$ must be a down-closed set. Conversely, any down-closed ideal trivially satisfies this implication. Since $T$ is totally ordered, its down-closed subsets containing $0$ are precisely the listed intervals.
	
\smallskip \noindent (3) By Theorem~\ref{subtractiveclosureproperties}(a),
	\[
	\scl_T(I) = \{t \in T : \max(t,i) \in I \text{ for some } i \in I\}.
	\]
	If $t \le i$ for some $i \in I$, then $\max(t,i) = i \in I$, so $t \in \scl_T(I)$. Conversely, if $\max(t,i) \in I$ for some $i \in I$, then $t \le \max(t,i)$, whence $t \le j$ for some $j \in I$. Therefore
	\[
	\scl_T(I) = \{t \in T : t \le i \text{ for some } i \in I\} = {\downarrow}I.
	\]
	Let $s = \sup I$.
	\begin{itemize}
	\item If $s \in I$, then for any $t \in [0,s]$, we have $t \le s \in I$, implying $[0,s] \subseteq \scl_T(I)$. Conversely, if $t \in \scl_T(I)$, then $t \le i$ for some $i \in I$. Since $s = \sup I$, we have $i \le s$, forcing $t \in [0,s]$. Thus, $\scl_T(I) = [0,\sup I]$.
	\item If $s \notin I$, then for any $t < s$, by the definition of the supremum there exists some $i \in I$ such that $t < i < s$, which yields $t \in \scl_T(I)$ and establishes $[0,s) \subseteq \scl_T(I)$. Conversely, if $t \in \scl_T(I)$, then $t \le i$ for some $i \in I$. Because $i \le s$ and $s \notin I$, it follows that $i < s$, forcing $t < s$ and showing $\scl_T(I) \subseteq [0,s)$. Thus, $\scl_T(I) = [0,\sup I)$.
	\end{itemize} This completes the proof.
\end{proof}

\begin{lemma}\label{lem:sum_is_union}
	For any ideals $I, J \in \Id(T)$ of the semiring $T = [0,1]$ in Theorem \ref{NasehpourTsemiring}, their sum is equal to their union, i.e., $I + J = I \cup J$.
\end{lemma}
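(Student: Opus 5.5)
The plan is a double inclusion, using that addition in $T$ is $\max$. Recall that $I+J=\{i\oplus j : i\in I,\ j\in J\}$. This set is already an ideal: it is closed under $\oplus$ by commutativity and associativity of $\max$. It also absorbs multiplication, since $t\odot(i\oplus j)=(t\odot i)\oplus(t\odot j)$ by distributivity. So no further closure needs to be taken.

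For the inclusion $I\cup J\subseteq I+J$, note that every ideal contains $0$. Given $i\in I$, write $i=\max(i,0)=i\oplus 0$ with $0\in J$; symmetrically for $j\in J$.

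For the reverse inclusion $I+J\subseteq I\cup J$, take $i\in I$ and $j\in J$. Since $T$ is totally ordered, $i\oplus j=\max(i,j)\in\{i,j\}$. Hence $i\oplus j$ lies in $I$ or in $J$.

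As a consistency check, we can observe directly that $I\cup J$ is an ideal, using the description in Theorem~\ref{NasehpourTsemiring}(1). If either ideal equals $T$, the union is $T$. Otherwise it is a subset of $[0,1)$ containing $0$. The double inclusion above does not need this, however.

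I do not expect any step to be a real obstacle. The only point needing care is to confirm that the ideal sum here is the setwise sum $\{i+j\}$, rather than its ideal closure; this is guaranteed by the distributivity observation in the first paragraph.
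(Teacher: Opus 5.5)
Your proof is correct and follows the paper's argument: $I \cup J \subseteq I+J$ by pairing each element with $0$, and $I+J \subseteq I\cup J$ because $\max(i,j)\in\{i,j\}$ in the totally ordered set $T$. The paper instead takes $I+J=\{\max(i,j) : i\in I,\ j\in J\}$ as the definition, so your extra check that this set is already an ideal is a harmless refinement.
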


\begin{proof}
	By definition, $I + J = \{\max(i, j) : i \in I, j \in J\}$. Since $0 \in I \cap J$, setting $j=0$ yields $I \subseteq I+J$ and setting $i=0$ yields $J \subseteq I+J$; hence, $I \cup J \subseteq I + J$. Conversely, since $T$ is totally ordered, $\max(i, j) \in \{i, j\} \subseteq I \cup J$ for all $i \in I$ and $j \in J$, which gives $I + J \subseteq I \cup J$.
\end{proof}

\begin{theorem}\label{aporousKuratowskisemiring}
	The semiring $T = [0,1]$ in Theorem \ref{NasehpourTsemiring} is porous and not austere; however, it is a Kuratowski semiring.
\end{theorem}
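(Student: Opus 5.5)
The proof splits into three independent claims, each of which will follow from Theorem \ref{NasehpourTsemiring} and Lemma \ref{lem:sum_is_union}.

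\textbf{Porosity.} I will exhibit a single non-subtractive ideal. By Theorem \ref{NasehpourTsemiring}(1), $I=\{0,\tfrac12\}$ is an ideal, since it is a subset of $[0,1)$ containing $0$. It is not one of the intervals listed in part (2), so it is not subtractive. Alternatively, part (3) gives $\scl_T(I)=[0,\tfrac12]\neq I$.

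\textbf{Not austere.} Here I will use the paper's meaning of austerity, as in the proof of Proposition \ref{austereisKuratowski}: every nonzero ideal should have subtractive closure $H$. The ideal $[0,\tfrac12]$ is nonzero and already subtractive by part (2), yet it is proper. Hence its closure is itself and not $T$, so $T$ is not austere.

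\textbf{Kuratowski.} By Theorem \ref{subtractiveclosureproperties}, $\scl_T$ is a closure operator, so (CL1) and (CL3) hold. For (K1), note that $\{0\}=[0,0]$ is subtractive, so $\scl_T(\{0\})=\{0\}$.

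The main step is (K2). By Lemma \ref{lem:sum_is_union}, sums of ideals are unions. So I must show
\[
\scl_T(I\cup J)=\scl_T(I)\cup\scl_T(J).
\]
The cleanest route avoids the case split in part (3). Its proof establishes the description $\scl_T(I)={\downarrow}I$ (valid for every ideal, including $T$, where ${\downarrow}T=T$). Down-sets commute with unions:
\[
{\downarrow}(I\cup J)={\downarrow}I\cup{\downarrow}J,
\]
because an element lies below some member of $I\cup J$ exactly when it lies below some member of $I$ or some member of $J$. Finally, $\scl_T(I)\cup\scl_T(J)=\scl_T(I)+\scl_T(J)$, again by Lemma \ref{lem:sum_is_union}, since both closures are ideals.

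The only point needing care is that the down-set description was derived in the proof of Theorem \ref{NasehpourTsemiring}(3) rather than stated in it. I will quote it explicitly and note that it covers $I=T$. If preferred, one can instead argue via suprema. The supremum of $I\cup J$ equals $\max(\sup I,\sup J)$, attained in whichever ideal achieves the larger supremum. When the suprema are equal, it is attained in the union exactly when it is attained in one of them. In each case part (3) then gives matching intervals.
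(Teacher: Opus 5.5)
Your proposal is correct and follows essentially the same route as the paper. It uses $\{0,\tfrac12\}$ to show porosity, a proper nonzero subtractive ideal such as $[0,\tfrac12]$ to rule out austerity, and for the Kuratowski property the down-set description of the closure together with Lemma~\ref{lem:sum_is_union} applied twice; your explicit checks of (K1) and of the case $I=T$ are small additions the paper leaves implicit.
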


\begin{proof}
	Because every subset of $[0,1)$ containing $0$ is an ideal, whereas subtractive ideals are strictly confined to down-closed intervals, $T$ contains plenty of ideals that are not subtractive (for example, the ideal $I = \{0, \frac{1}{2}\}$ has $\scl_T(I) = [0, \frac{1}{2}] \neq I$). Thus, $T$ is porous. Additionally, $T$ possesses numerous non-trivial subtractive ideals, meaning it is not austere. 
	
	To show that $T$ is a Kuratowski semiring, we must verify that $\scl_T(I + J) = \scl_T(I) + \scl_T(J)$ for all $I, J \in \Id(T)$. Recall that $\scl_T(K) = \{t \in T : t \le k \text{ for some } k \in K\}$. By Lemma \ref{lem:sum_is_union}, we have $I + J = I \cup J$. Therefore,
	\begin{align*}
		\scl_T(I + J) &= \scl_T(I \cup J) \\
		&= \{t \in T : t \le k \text{ for some } k \in I \cup J\} \\
		&= \{t \in T : t \le i \text{ for some } i \in I\} \cup \{t \in T : t \le j \text{ for some } j \in J\} \\
		&= \scl_T(I) \cup \scl_T(J).
	\end{align*}
	Since $\scl_T(I)$ and $\scl_T(J)$ are themselves down-closed intervals, they are valid ideals of $T$. A second application of Lemma \ref{lem:sum_is_union} yields $\scl_T(I) \cup \scl_T(J) = \scl_T(I) + \scl_T(J)$, completing the proof.
\end{proof}

\section{Nucleus semirings}\label{sec:nucleus}

\begin{definition}\label{nucleusmapdef}
	Let $(P, \le)$ be a meet-semilattice. A function $\nu \colon P \to P$ is a nucleus map \cite[\S5.3]{PicadoPultr2012} if the following conditions hold for all $x, y \in P$:
	\begin{description}
		\item[(CL1)] $x \le \nu(x)$,
		\item[(CL3)] $\nu(\nu(x)) = \nu(x)$,
		\item[(N)] $\nu(x \wedge y) = \nu(x) \wedge \nu(y)$.
	\end{description}
\end{definition}

\begin{remark}\label{nucleusimpliesmonotonicity}
Let $\nu$ be a nucleus on a meet-semilattice $(P, \le)$. Then $\nu$ is a closure operator because \[x \le y \implies x = x \wedge y \implies \nu(x) = \nu(x) \wedge \nu(y) \implies \nu(x) \le \nu(y).\] On complete lattices, $\nu$ defines a finite erosion; we recall that preserving arbitrary meets makes it a classical morphological erosion \cite{BanonBarrera1993}. A natural question arises in light of Theorem~\ref{subtractiveclosureproperties}(c): does the subtractive closure operator always distribute over finite intersections? As we will demonstrate, the answer is negative. To streamline the computational verification of this property, we introduce the concept of a prenucleus map:
\end{remark}

\begin{definition}
	Let $(P,\le)$ be a meet-semilattice. A function $\pi \colon P \to P$ is called a prenucleus \cite[\S11.5]{PicadoPultr2012} if the following conditions hold for all $x,y \in P$:
	\begin{description}
		\item[(CL1)] $x \le \pi(x)$,
		\item[(CL2)] $x \le y \implies \pi(x) \le \pi(y)$,
		\item[(PN)] $\pi(x) \wedge y \le \pi(x \wedge y)$.
	\end{description}
\end{definition}

\begin{proposition}\label{closureprenucleusgivesnucleus}
	Every operator on a meet-semilattice $P$ that is both a closure and a prenucleus is necessarily a nucleus.
\end{proposition}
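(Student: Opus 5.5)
Let $\nu$ be an operator on $P$ that is both a closure operator and a prenucleus. Conditions (CL1) and (CL3) of Definition~\ref{nucleusmapdef} are part of the closure axioms, so the only thing to prove is the meet-preservation law (N): $\nu(x\wedge y)=\nu(x)\wedge\nu(y)$ for all $x,y\in P$. I will prove the two inclusions separately. One comes straight from Proposition~\ref{subsuperdistributivepro}; the other comes from applying (PN) twice and then idempotence.

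The easy inequality is $\nu(x\wedge y)\le \nu(x)\wedge\nu(y)$. It is exactly the sub-distributivity in Proposition~\ref{subsuperdistributivepro}(1), which only uses monotonicity (CL2).

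For the reverse inequality I apply (PN) twice.
\begin{enumerate}
\item Take (PN) with arguments $x$ and $\nu(y)$. This gives $\nu(x)\wedge\nu(y)\le \nu\bigl(x\wedge \nu(y)\bigr)$.
\item By commutativity of the meet, $x\wedge\nu(y)=\nu(y)\wedge x$. Then (PN) with arguments $y$ and $x$ gives $\nu(y)\wedge x\le \nu(y\wedge x)=\nu(x\wedge y)$.
\item Apply monotonicity (CL2) to step 2 and then idempotence (CL3). This gives $\nu\bigl(x\wedge\nu(y)\bigr)\le \nu(\nu(x\wedge y))=\nu(x\wedge y)$.
\end{enumerate}
Chaining steps 1 and 3 yields $\nu(x)\wedge\nu(y)\le\nu(x\wedge y)$, and together with the easy inequality this gives (N).

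The only real obstacle is noticing that (PN) must be applied twice, with the roles of the variables swapped. The first application moves one closure inside the meet. The second removes the remaining inner closure. Idempotence is what absorbs the resulting double application of $\nu$, and it is the only place (CL3) is used.
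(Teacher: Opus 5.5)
Your proof is correct and follows the same route as the paper: the easy inequality comes from monotonicity, and the reverse is the chain $\nu(x)\wedge\nu(y)\le\nu\bigl(x\wedge\nu(y)\bigr)\le\nu\bigl(\nu(x\wedge y)\bigr)=\nu(x\wedge y)$, obtained from two applications of (PN), monotonicity, and idempotence. You spell out the second use of (PN), with the variables swapped and commutativity of the meet, more explicitly than the paper does.
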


\begin{proof}
	Let $\pi \colon P \to P$ be a closure operator and a prenucleus. The inequality $\pi(a \wedge b) \le \pi(a) \wedge \pi(b)$ follows directly from monotonicity. For the reverse inequality, applying the prenucleus condition $\textup{(PN)}$ successively to each component yields:
	\[
	\pi(a) \wedge \pi(b) \le \pi(a \wedge \pi(b)) \le \pi(\pi(a \wedge b)) = \pi(a \wedge b),
	\]
	where the second inequality arises from applying $\textup{(PN)}$ to $a \wedge \pi(b)$ under the monotonic action of $\pi$, and the final equality holds by idempotence.
\end{proof}

\begin{remark}
	Proposition~\ref{closureprenucleusgivesnucleus} is highly useful for computational purposes. Since the subtractive closure $\scl_H$ of a hemiring $H$ is always a closure operator, proving that $\scl_H$ acts as a nucleus map on the lattice of ideals reduces to verifying the prenucleus condition
	\[
	\scl_H(I) \cap J \subseteq \scl_H(I \cap J)
	\]
	for all ideals $I, J \in \Id(H)$. This requires evaluating the subtractive closure of only two ideals, whereas verifying the nucleus condition directly via
	\[
	\scl_H(I) \cap \scl_H(J) \subseteq \scl_H(I \cap J)
	\]
	would require computing the subtractive closure of three distinct ideals.
\end{remark}

\begin{theorem}\label{nonnucleusthm}
	The subtractive closure operator $\scl_T$ on the lattice of ideals of the semiring $T = [0,1]$ from Theorem~\ref{NasehpourTsemiring} is not a nucleus map.
\end{theorem}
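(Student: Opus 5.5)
The plan is to exhibit two explicit ideals $I,J\in\Id(T)$ with $\scl_T(I\cap J)\subsetneq \scl_T(I)\cap\scl_T(J)$. Throughout we use two earlier results. By Theorem~\ref{NasehpourTsemiring}(1), every subset of $[0,1)$ containing $0$ is an ideal, so $I$ and $J$ can be chosen freely among such sets. By the proof of Theorem~\ref{NasehpourTsemiring}(3), $\scl_T(K)={\downarrow}K$ for every ideal $K$. The idea is that closure is down-closure, while intersection can destroy large elements that the two ideals do not share.

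Take $I=\{0,\tfrac12\}$ and $J=\{0,\tfrac13\}$. Both are ideals, since each is a subset of $[0,1)$ containing $0$. Their intersection is $I\cap J=\{0\}$, and $\{0\}$ is subtractive by Theorem~\ref{NasehpourTsemiring}(2) (it is $[0,0]$), so $\scl_T(I\cap J)=\{0\}$. On the other hand, Theorem~\ref{NasehpourTsemiring}(3) gives $\scl_T(I)=[0,\tfrac12]$ and $\scl_T(J)=[0,\tfrac13]$, since in each case the supremum belongs to the ideal. Hence
\[
\scl_T(I)\cap\scl_T(J)=[0,\tfrac13]\neq\{0\}=\scl_T(I\cap J),
\]
so condition (N) fails and $\scl_T$ is not a nucleus.

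There is no real obstacle here. The only point to check is that the chosen sets are genuine ideals, and that follows directly from Theorem~\ref{NasehpourTsemiring}(1) because the sets avoid $1$. As a consistency remark, the prenucleus inequality also fails for this pair: $\scl_T(I)\cap J=\{0,\tfrac13\}$ is not contained in $\scl_T(I\cap J)=\{0\}$. This agrees with Proposition~\ref{closureprenucleusgivesnucleus}.
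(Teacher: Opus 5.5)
Your proof is correct and is essentially the paper's argument: the paper uses the same pair of ideals $\{0,1/3\}$ and $\{0,1/2\}$, whose intersection is $\{0\}$. The only difference is that the paper shows the prenucleus inequality $\scl_T(J)\cap I\subseteq\scl_T(J\cap I)$ fails, whereas you refute (N) directly; your version is slightly more self-contained, since it does not rely on the easy but unstated fact that every nucleus satisfies (PN).
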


\begin{proof}
	By the preceding remark, it suffices to show that the prenucleus condition fails. Consider the ideals $I = \{0, 1/3\}$ and $J = \{0, 1/2\}$ in $\Id(T)$. We have $\scl_T(J) \cap I = [0, 1/2] \cap \{0, 1/3\} = \{0, 1/3\}$, whereas $\scl_T(J \cap I) = \scl_T(\{0\}) = \{0\}$. Since $\{0, 1/3\} \not\subseteq \{0\}$, the condition $\scl_T(J) \cap I \subseteq \scl_T(J \cap I)$ is explicitly violated.
\end{proof}

\begin{remark}
In a recent paper, Goswami and Dube \cite[Lemma 2.1(7)]{GoswamiDube2024} claimed that for any family of ideals $\{I_\lambda\}_{\lambda \in \Lambda}$ of a semiring $R$,
	\[
	\mathcal{C}_k\left(\bigcap_{\lambda \in \Lambda} I_\lambda\right) = \bigcap_{\lambda \in \Lambda} \mathcal{C}_k(I_\lambda),
	\] where $\mathcal{C}_k$ denotes the $k$-closure operator, which coincides with our subtractive closure operator $\scl_R$. While the forward inclusion $\subseteq$ follows immediately from the monotonicity of closure operators, the reverse inclusion $\supseteq$ does not hold in general. Theorem~\ref{nonnucleusthm} provided an explicit counterexample to this claim. We therefore respectfully note that the equality stated in \cite[Lemma 2.1(7)]{GoswamiDube2024} is not valid for arbitrary semirings. To formally distinguish the class of hemirings for which this distributive equality actually holds, we introduce the following definition.
\end{remark}

\begin{definition}\label{nucleushemiringdef}
	We call a hemiring $H$ a nucleus hemiring if its subtractive closure operator $\scl_H$ acts as a nucleus map on the lattice of ideals $\Id(H)$.
\end{definition}

\begin{proposition}\label{subtractiveisnucleus}
Every subtractive hemiring is nucleus.
\end{proposition}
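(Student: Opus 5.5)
The plan is to argue exactly as in the first half of the proof of Proposition~\ref{austereisKuratowski}. In a subtractive hemiring $H$, every ideal is subtractive by definition. So for each $I \in \Id(H)$, the ideal $I$ itself is a subtractive ideal containing $I$. Hence $\scl_H(I) \subseteq I$ by Definition~\ref{subtractiveclosureidealdef}. Extensivity (CL1), which holds by Theorem~\ref{subtractiveclosureproperties}, gives $I \subseteq \scl_H(I)$. Therefore $\scl_H(I) = I$, and $\scl_H$ restricted to $\Id(H)$ is the identity map.

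Next, I would check the three axioms of Definition~\ref{nucleusmapdef} for the identity map on the meet-semilattice $(\Id(H),\cap)$. Here one uses that the intersection of two ideals is again an ideal. Axioms (CL1) and (CL3) are immediate. Axiom (N) reads $\scl_H(I \cap J) = I \cap J = \scl_H(I) \cap \scl_H(J)$. Alternatively, one could invoke Proposition~\ref{closureprenucleusgivesnucleus} and verify (PN). This amounts to $\scl_H(I)\cap J = I \cap J = \scl_H(I\cap J)$, which is trivial.

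There is no genuine obstacle here. The only point needing care is applying the closure to $I \cap J$: we need $I \cap J$ to be an ideal, hence subtractive, which follows from Proposition~\ref{intersectionsubtractiveideals} or directly.
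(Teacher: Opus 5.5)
Your proposal is correct and takes the same approach as the paper, which also just observes that $\scl_H$ is the identity on $\Id(H)$ when $H$ is subtractive, making it trivially a nucleus. Your extra checks (that $I\cap J$ is an ideal, and the optional (PN) verification) are fine but not needed beyond that observation.
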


\begin{proof}
If $H$ is subtractive, $\scl_H$ is the identity operator, which is trivially a nucleus map.
\end{proof}

Recall that an $R$-module $M$ is called uniform if any two nonzero submodules of $M$ intersect nontrivially \cite{Lam1999}. A commutative ring $R$ is called uniform if $R$ as an $R$-module is uniform. This motivates us to give the following definition:

\begin{definition}
We call a hemiring $H$ uniform if any two nonzero ideals of $H$ intersect nontrivially. 
\end{definition}

\begin{remark}
	Every entire hemiring $E$ is uniform. This is because if $a \in I$ and $b \in J$ are nonzero elements of the ideals $I$ and $J$ of $E$, then $ab \in I \cap J$ is nonzero.
\end{remark}

\begin{proposition}\label{austereuniformisnucleus}
Every austere uniform hemiring is nucleus.
\end{proposition}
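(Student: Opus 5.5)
By Proposition~\ref{closureprenucleusgivesnucleus}, it suffices to verify the prenucleus condition. Since $\scl_H$ is a closure operator, we only need to show
\[
\scl_H(I)\cap \scl_H(J)\subseteq \scl_H(I\cap J)
\]
for all ideals $I,J$ of $H$; the reverse inclusion is Theorem~\ref{subtractiveclosureproperties}(c). Alternatively, we can check the nucleus equality directly. Either way, the argument is a short case analysis using the defining property of austere hemirings, as in the proof of Proposition~\ref{austereisKuratowski}: the subtractive closure of every nonzero ideal is $H$, while $\scl_H(\{0\})$ is a fixed ideal.

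\emph{Case 1: $I=\{0\}$ or $J=\{0\}$.} Say $I=\{0\}$. Then $I\cap J=\{0\}$, so $\scl_H(I\cap J)=\scl_H(\{0\})=\scl_H(I)$. Monotonicity gives $\scl_H(I)\subseteq\scl_H(J)$, hence $\scl_H(I)\cap\scl_H(J)=\scl_H(I)=\scl_H(I\cap J)$. This case uses nothing beyond monotonicity (CL2).

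\emph{Case 2: $I$ and $J$ are both nonzero.} Here uniformity is used: $I\cap J$ is a nonzero ideal. Since $H$ is austere, $\scl_H(I)=\scl_H(J)=\scl_H(I\cap J)=H$, so both sides equal $H$.

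The only point needing care is the austere hypothesis as it was used in Proposition~\ref{austereisKuratowski}, namely that nonzero ideals have subtractive closure $H$. The zero ideal needs no separate treatment because Case 1 handles it by monotonicity, whether or not $\{0\}$ is itself subtractive. I expect no genuine obstacle; the main step is simply observing that uniformity prevents two nonzero ideals from having zero intersection, which is exactly what would otherwise break the equality.
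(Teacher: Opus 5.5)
Your proof is correct and is essentially the paper's argument, which just invokes the ``dual'' of the proof of Proposition~\ref{austereisKuratowski}: the zero-ideal case is trivial, and for nonzero $I,J$ uniformity makes $I\cap J$ nonzero, so austerity forces all three closures to equal $H$. One small slip: the inclusion you display is the nucleus condition itself, not the prenucleus condition $\scl_H(I)\cap J\subseteq \scl_H(I\cap J)$; since you verify the nucleus equality directly, the appeal to Proposition~\ref{closureprenucleusgivesnucleus} is simply unnecessary and creates no gap.
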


\begin{proof}
	 If $H$ is austere and the intersection of any two nonzero ideals is nonzero, the assertion follows by a dual argument to the proof of Proposition~\ref{austereisKuratowski}.
\end{proof}

\begin{proposition}\label{uniserialisnucleus}
A uniserial hemiring $H$ is nucleus.
\end{proposition}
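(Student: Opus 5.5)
The proof will mirror the argument for Proposition~\ref{uniserialisKuratowski}, with intersection in place of sum and the order of inclusions reversed. By Definition~\ref{nucleushemiringdef} and Definition~\ref{nucleusmapdef}, it suffices to check condition (N) for $\scl_H$ on $\Id(H)$. Properties (CL1) and (CL3) already hold because $\scl_H$ is a closure operator (Theorem~\ref{subtractiveclosureproperties}).

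Let $I, J \in \Id(H)$. Since $H$ is uniserial, its ideals form a chain under inclusion, so we may assume without loss of generality that $I \subseteq J$. Then $I \cap J = I$, and therefore
\[
\scl_H(I \cap J) = \scl_H(I).
\]
Monotonicity (CL2) gives $\scl_H(I) \subseteq \scl_H(J)$, so
\[
\scl_H(I) \cap \scl_H(J) = \scl_H(I).
\]
Comparing the two displays yields
\[
\scl_H(I \cap J) = \scl_H(I) \cap \scl_H(J),
\]
which is exactly (N).

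One could instead verify the prenucleus condition and apply Proposition~\ref{closureprenucleusgivesnucleus}, but the direct route above is shorter. No step is a genuine obstacle. The only point that needs checking is that the lattice of ideals $\Id(H)$ is closed under intersection, so that the meet of $I$ and $J$ in $\Id(H)$ really is $I \cap J$. This holds because the intersection of two ideals contains $0$ and is closed under addition and under multiplication by elements of $H$.
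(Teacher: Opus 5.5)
Your proof is correct and matches the paper's argument exactly: assume $I \subseteq J$ by uniseriality, so that $\scl_H(I\cap J)=\scl_H(I)$ and, by monotonicity, $\scl_H(I)\cap\scl_H(J)=\scl_H(I)$. Your extra remarks, that (CL1) and (CL3) are inherited from the closure operator and that $\Id(H)$ is closed under intersection, are harmless additions the paper leaves implicit.
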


\begin{proof}
	In a uniserial hemiring the ideals are totally ordered. For any $I,J\in\Id(H)$ we may assume $I\subseteq J$. Then $I\cap J=I$, so $\scl_H(I\cap J)=\scl_H(I)$. Monotonicity gives $\scl_H(I)\subseteq\scl_H(J)$, whence $\scl_H(I)\cap\scl_H(J)=\scl_H(I)$. Combining the two equalities yields $\scl_H(I\cap J)=\scl_H(I)\cap\scl_H(J)$.
\end{proof}

\begin{proposition}\label{plusunitelements}
	Let $I$ be an ideal of a semiring $S$ satisfying 
	\begin{equation}\label{plusunitelementseq}
		\exists s \in I \text{ and } \exists u \in U(S) \text{ such that } s + u \in I.
	\end{equation}
	Then $\scl_S(I)=S$. In particular, if $S$ is a uniform semiring and every nonzero ideal of $S$ satisfies \eqref{plusunitelementseq}, then $S$ is a nucleus semiring.
\end{proposition}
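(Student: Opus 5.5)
The plan is to prove the first assertion directly from the explicit description of the subtractive closure, and then derive the second assertion by the same dichotomy used for austere hemirings.

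\textbf{First assertion.} Suppose $s \in I$, $u \in U(S)$ and $s+u \in I$. By Theorem~\ref{subtractiveclosureproperties}(a), $u \in \scl_S(I)$, since $u+s \in I$ with $s \in I$. Because $\scl_S(I)$ is an ideal, it contains $u^{-1}u = 1$. Hence it contains $h\cdot 1 = h$ for every $h \in S$, so $\scl_S(I)=S$. (Alternatively, one can avoid part (a): $\scl_S(I)$ is subtractive and contains both $s$ and $s+u$, so it contains $u$.)

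\textbf{Second assertion.} Assume $S$ is uniform and every nonzero ideal satisfies \eqref{plusunitelementseq}. We verify the nucleus condition $\scl_S(I)\cap\scl_S(J)\subseteq\scl_S(I\cap J)$ for all $I,J\in\Id(S)$; the reverse inclusion is Theorem~\ref{subtractiveclosureproperties}(c). We split into two cases.
\begin{enumerate}
\item If $I\neq\{0\}$ and $J\neq\{0\}$, uniformity gives $I\cap J\neq\{0\}$. This ideal then satisfies \eqref{plusunitelementseq}, so by the first part $\scl_S(I\cap J)=S$, and the inclusion is trivial.
\item If, say, $I=\{0\}$, then $I\cap J=\{0\}$. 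The left side is contained in $\scl_S(\{0\})$ by monotonicity, since $\scl_S(I)=\scl_S(\{0\})$. This case is symmetric for $J$.
\end{enumerate}

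Equality thus holds in all cases, so $\scl_S$ satisfies (N). Together with (CL1) and (CL3) from Theorem~\ref{subtractiveclosureproperties}, this shows $\scl_S$ is a nucleus map and $S$ is a nucleus semiring.

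The only point needing care is the zero-ideal case. Note that $\scl_S(\{0\})$ need not equal $\{0\}$ unless $S$ is zerosumfree, but the argument does not require this: it only uses monotonicity, because the zero ideal sits below both $I\cap J$ and $I$.
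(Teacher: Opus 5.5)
Your proof is correct and is essentially the paper's argument: subtractivity of $\scl_S(I)$ puts the unit $u$ in it, which forces $\scl_S(I)=S$, and uniformity makes $I\cap J$ nonzero, so $\scl_S(I\cap J)=S$, while the zero-ideal case is trivial. One correction to your closing remark: the zero ideal is always subtractive (if $x+y\in\{0\}$ and $x=0$, then $y=0$), so $\scl_S(\{0\})=\{0\}$ in every hemiring, zerosumfree or not, although your argument never relies on this.
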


\begin{proof}
	First, we show that $\scl_S(I) = S$. By the definition of the subtractive closure, since $s \in I$ and $s+u \in I$, subtractivity yields $u \in \scl_S(I)$. Because $u$ is a unit and is contained in the ideal $\scl_S(I)$, the ideal must be the whole semiring, so $\scl_S(I) = S$. To prove the ``in particular'' part, recall that $\scl_S$ is a nucleus if it preserves finite intersections: $\scl_S(I \cap J) = \scl_S(I) \cap \scl_S(J)$. If either $I$ or $J$ is the zero ideal, this holds trivially. If both $I$ and $J$ are nonzero, their intersection $I \cap J$ is also nonzero because $S$ is a uniform semiring. Thus $I$, $J$, and $I \cap J$ all satisfy \eqref{plusunitelementseq}. Applying the first part of the proof to each yields $\scl_S(I) = \scl_S(J) = \scl_S(I \cap J) = S$. Consequently, $\scl_S(I \cap J) = \scl_S(I) \cap \scl_S(J) = S$, and thus $\scl_S$ acts as a nucleus map on $\Id(S)$.
\end{proof}

\begin{lemma}\label{gcdintersection}
	Let $I$ and $J$ be nonzero ideals of the standard semiring $\mathbb{N}_0$. Then
	\[
	\gcd(I\cap J) = \operatorname{lcm}(\gcd(I), \gcd(J)).
	\]
\end{lemma}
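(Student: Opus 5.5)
The key point is that $\gcd$ does not distinguish a nonzero ideal of $\mathbb{N}_0$ from the principal ideal it sits inside, up to finitely many elements. So the plan is to show that every nonzero ideal $I$ of $\mathbb{N}_0$ contains all sufficiently large multiples of $d_I = \gcd(I)$, and then to read off the gcd of $I \cap J$ from the shape of the intersection.

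\emph{Step 1: $I$ contains all large multiples of $d_I$.} Since $I \subseteq d_I\mathbb{N}_0$ by definition of $d_I$, $I$ is a nonzero additive submonoid of $d_I\mathbb{N}_0$. Dividing by $d_I$ gives a submonoid $I/d_I$ of $\mathbb{N}_0$ whose elements have gcd $1$. Pick finitely many elements $a_1,\dots,a_r \in I/d_I$ with $\gcd(a_1,\dots,a_r)=1$; this is possible because the gcd of an increasing chain of finite subsets stabilizes. Write $1 = \sum c_k a_k$ with $c_k \in \mathbb{Z}$. Splitting into positive and negative parts, we get $p, q$ in the monoid with $p - q = 1$. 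Then every integer $n \ge q(q-1)$ is a nonnegative combination of $p$ and $q$, by the standard Frobenius-type argument. Hence there is $N_I$ with $d_I m \in I$ for all $m \ge N_I$. (Closure under multiplication by $\mathbb{N}_0$ is not even needed here, only closure under addition.)

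\emph{Step 2: Conclude the equality.} Set $d = d_I$, $e = d_J$, $\ell = \operatorname{lcm}(d,e)$. First, $I \cap J \subseteq d\mathbb{N}_0 \cap e\mathbb{N}_0 = \ell\mathbb{N}_0$, so $\ell \mid \gcd(I \cap J)$. Also $I \cap J$ is nonzero, since it contains $ij$ for nonzero $i\in I$, $j\in J$. Conversely, by Step 1, for all large $m$ both $\ell m \in I$ and $\ell m \in J$. Thus $I \cap J$ contains $\ell m$ and $\ell(m+1)$, whose gcd is $\ell$. Therefore $\gcd(I\cap J) \mid \ell$, and equality follows.

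The main obstacle is Step 1, specifically justifying that a numerical-semigroup-like subset contains all large multiples of its gcd. Using two consecutive elements $p$ and $q=p-1$ of $I/d_I$ as above keeps it elementary. One subtlety is producing such $p,q$: take $P$ to be the sum of the $a_k$ with positive coefficient (weighted by $c_k$) and $Q$ the analogous negative sum, so $P-Q=1$ with $P,Q \in I/d_I$. If $Q=0$, then $1 \in I/d_I$ already and Step 1 is immediate.
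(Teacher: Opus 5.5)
Your proof is correct and follows essentially the same route as the paper: $\ell \mid \gcd(I\cap J)$ because $I\cap J\subseteq \ell\mathbb{N}_0$, and conversely $\gcd(I\cap J)\mid \ell$ because cofiniteness puts two consecutive multiples $m\ell,(m+1)\ell$ in $I\cap J$. The only difference is that you prove directly that $\frac{1}{d}I$ contains all sufficiently large integers (via B\'ezout and the bound for two consecutive generators $q, q+1$), whereas the paper cites the standard fact that a submonoid of $\mathbb{N}_0$ with gcd $1$ is a numerical semigroup and hence cofinite.
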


\begin{proof}
Let $d_1 = \gcd(I)$, $d_2 = \gcd(J)$, $\ell = \operatorname{lcm}(d_1, d_2)$, and $g = \gcd(I \cap J)$. Since $I \subseteq d_1\mathbb{N}_0$ and $J \subseteq d_2\mathbb{N}_0$, their intersection satisfies $I \cap J \subseteq \ell\mathbb{N}_0$. Thus, $\ell$ divides every element of $I \cap J$, which implies $\ell \mid g$.
	
To prove the reverse divisibility, note that $(1/d_1)I$ and $(1/d_2)J$ are ideals of $\mathbb{N}_0$ with greatest common divisors equal to $1$. By Lemma 2.1 of \cite{RosalesSanchez2009}, $(1/d_1)I$ and $(1/d_2)J$ are numerical semigroups and are therefore cofinite in $\mathbb{N}_0$. Consequently, $I$ and $J$ contain all sufficiently large multiples of $d_1$ and $d_2$, respectively. It follows that $I \cap J$ contains all sufficiently large multiples of $\ell$. Choosing an integer $m$ large enough such that $m\ell, (m+1)\ell \in I \cap J$, we deduce that $g$ must divide both elements since $g = \gcd(I \cap J)$. Hence, $g$ divides their difference: \[g \mid ((m+1)\ell - m\ell) = \ell.\] Combining $\ell \mid g$ and $g \mid \ell$ yields $g = \ell$, completing the proof.
\end{proof}

\begin{lemma}\label{closureofnonzeroideal}
	Let $I$ be a nonzero ideal of $\mathbb{N}_0$ with $\gcd(I) = d$. Then $\scl_{\mathbb{N}_0}(I) = d\mathbb{N}_0$.
\end{lemma}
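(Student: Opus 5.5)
We prove the two inclusions $\scl_{\mathbb{N}_0}(I)\subseteq d\mathbb{N}_0$ and $d\mathbb{N}_0\subseteq \scl_{\mathbb{N}_0}(I)$ separately, using the explicit description of the closure from Theorem~\ref{subtractiveclosureproperties}(a).

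For the first inclusion, we check that $d\mathbb{N}_0$ is a subtractive ideal containing $I$. Since $d=\gcd(I)$, every element of $I$ is a multiple of $d$, so $I\subseteq d\mathbb{N}_0$. Subtractivity is immediate: if $x+y\in d\mathbb{N}_0$ and $x\in d\mathbb{N}_0$, then $d$ divides $(x+y)-x=y$. Because $\scl_{\mathbb{N}_0}(I)$ is the smallest subtractive ideal containing $I$, we get $\scl_{\mathbb{N}_0}(I)\subseteq d\mathbb{N}_0$.

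For the reverse inclusion, it suffices to show $d\in\scl_{\mathbb{N}_0}(I)$. Since $\scl_{\mathbb{N}_0}(I)$ is an ideal and ideals of $\mathbb{N}_0$ are closed under multiplication by arbitrary naturals, this gives $d\mathbb{N}_0\subseteq \scl_{\mathbb{N}_0}(I)$. As in the proof of Lemma~\ref{gcdintersection}, we argue as follows.
\begin{enumerate}
\item The set $(1/d)I$ is an ideal of $\mathbb{N}_0$ with gcd $1$.
\item By \cite[Lemma 2.1]{RosalesSanchez2009}, $(1/d)I$ is a numerical semigroup, hence cofinite in $\mathbb{N}_0$.
\item Therefore $I$ contains $md$ for all sufficiently large $m$. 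Fix such an $m$ with $md\in I$ and $(m+1)d\in I$.
\item Then $d+md=(m+1)d\in I$ with $md\in I$, so by Theorem~\ref{subtractiveclosureproperties}(a) (taking $h=d$ and $i=md$) we obtain $d\in\scl_{\mathbb{N}_0}(I)$.
\end{enumerate}

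The only genuinely non-formal step is step 3, the cofiniteness of $I$ inside $d\mathbb{N}_0$. This is supplied by the cited numerical-semigroup fact, and the argument simply reuses the same device as in Lemma~\ref{gcdintersection}. Should a self-contained argument be preferred, one would instead write $d$ as an integer combination of finitely many elements of $I$. Separating the positive and negative coefficients then gives $a,b\in I$ with $a=b+d$, and subtractivity again yields $d\in\scl_{\mathbb{N}_0}(I)$.
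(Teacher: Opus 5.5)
Your proof is correct and follows the paper's argument essentially verbatim: $I\subseteq d\mathbb{N}_0$, with $d\mathbb{N}_0$ subtractive, gives one inclusion. For the other, cofiniteness of the numerical semigroup $\frac{1}{d}I$ supplies $md,(m+1)d\in I$, whence $d\in\scl_{\mathbb{N}_0}(I)$ and so $d\mathbb{N}_0\subseteq\scl_{\mathbb{N}_0}(I)$. Your alternative B\'ezout route is also valid: write $d=\sum c_k i_k$ and split it as $a=b+d$ with $a,b\in I$, using that $I$ is closed under addition and under multiplication by naturals, and taking $b=0$ if no coefficient is negative. That route removes the dependence on the cited numerical-semigroup lemma.
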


\begin{proof}
	Set $I' = \frac{1}{d}I \subseteq \mathbb{N}_0$. Then $\gcd(I') = 1$. Since $I \neq \{0\}$, $I'$ is a numerical semigroup, hence by Lemma 2.1 of \cite{RosalesSanchez2009}, it is cofinite and contains two consecutive integers $n, n+1$. Consequently, $dn, d(n+1) \in I$. Since $d(n+1) = dn + d$, we have $dn, dn + d \in I \subseteq \scl_{\mathbb{N}_0}(I)$. Because $\scl_{\mathbb{N}_0}(I)$ is subtractive, $d \in \scl_{\mathbb{N}_0}(I)$. As $\scl_{\mathbb{N}_0}(I)$ is an ideal and $d \in \scl_{\mathbb{N}_0}(I)$, it follows that $d\mathbb{N}_0 \subseteq \scl_{\mathbb{N}_0}(I)$. The reverse inclusion $\scl_{\mathbb{N}_0}(I) \subseteq d\mathbb{N}_0$ holds since $I \subseteq d\mathbb{N}_0$ and $d\mathbb{N}_0$ is subtractive. Thus $\scl_{\mathbb{N}_0}(I) = d\mathbb{N}_0$.
\end{proof}

\begin{theorem}\label{Nnucleus}
The standard semiring $\mathbb{N}_0$ is neither austere nor Kuratowski; however, it is nucleus.
\end{theorem}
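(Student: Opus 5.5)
The proof splits into the three claims of Theorem~\ref{Nnucleus}.

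\emph{Not austere.} Recall that austere means every nonzero ideal has subtractive closure equal to the whole hemiring. The ideal $2\mathbb{N}_0$ is nonzero and subtractive: if $a, a+b \in 2\mathbb{N}_0$, then $b$ is even. So $\scl_{\mathbb{N}_0}(2\mathbb{N}_0) = 2\mathbb{N}_0 \neq \mathbb{N}_0$, and $\mathbb{N}_0$ is not austere.

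\emph{Not Kuratowski.} This is exactly Example~\ref{NnotKuratowski}: with $I=2\mathbb{N}_0$ and $J=3\mathbb{N}_0$ we get $\scl(I)+\scl(J)=I+J=\mathbb{N}_0\setminus\{1\}$, whereas $\scl(I+J)=\mathbb{N}_0$. I would simply cite it.

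\emph{Nucleus.} We must show $\scl_{\mathbb{N}_0}(I\cap J)=\scl_{\mathbb{N}_0}(I)\cap\scl_{\mathbb{N}_0}(J)$ for all $I,J\in\Id(\mathbb{N}_0)$.

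First the degenerate case. If $I=\{0\}$ or $J=\{0\}$, say $I=\{0\}$, then both sides equal $\{0\}$. Indeed, $\{0\}$ is subtractive in $\mathbb{N}_0$, since $\mathbb{N}_0$ is zerosumfree, so the left side is $\scl(\{0\})=\{0\}$ and the right side is $\{0\}\cap\scl(J)=\{0\}$.

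Now the main case, with $I$ and $J$ both nonzero. The intersection $I\cap J$ is nonzero, because $\mathbb{N}_0$ is entire and hence uniform: if $a\in I$ and $b\in J$ are nonzero, then $ab\in I\cap J$ is nonzero. Set $d_1=\gcd(I)$ and $d_2=\gcd(J)$. Lemma~\ref{closureofnonzeroideal} gives
\[
\scl(I)=d_1\mathbb{N}_0, \qquad \scl(J)=d_2\mathbb{N}_0.
\]
Applying the same lemma to $I\cap J$ and then Lemma~\ref{gcdintersection} gives
\[
\scl(I\cap J)=\gcd(I\cap J)\,\mathbb{N}_0=\operatorname{lcm}(d_1,d_2)\,\mathbb{N}_0.
\]
It remains to note the elementary identity $d_1\mathbb{N}_0\cap d_2\mathbb{N}_0=\operatorname{lcm}(d_1,d_2)\mathbb{N}_0$, which is immediate from divisibility. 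Hence $\scl(I\cap J)=\scl(I)\cap\scl(J)$.

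The only real content lies in the two preceding lemmas, in particular the cofiniteness argument behind Lemma~\ref{gcdintersection}, which is already proved. So I expect no significant obstacle beyond handling the zero-ideal case separately.
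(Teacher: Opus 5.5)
Your proposal is correct and follows the paper's proof essentially line for line: a proper nonzero subtractive ideal $k\mathbb{N}_0$ (you take $k=2$), Example~\ref{NnotKuratowski} for the Kuratowski failure, and Lemmas~\ref{closureofnonzeroideal} and~\ref{gcdintersection} together with $d_1\mathbb{N}_0\cap d_2\mathbb{N}_0=\operatorname{lcm}(d_1,d_2)\mathbb{N}_0$ for the nucleus property, with the zero case handled separately. Your explicit check that $I\cap J\neq\{0\}$, which is needed to apply Lemma~\ref{closureofnonzeroideal} to $I\cap J$, is a point the paper leaves implicit. Zerosumfreeness is not needed for $\scl_{\mathbb{N}_0}(\{0\})=\{0\}$, since $\{0\}$ is subtractive in any hemiring.
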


\begin{proof}
The standard semiring $\mathbb{N}_0$ is not austere because $k\mathbb{N}_0$ is a proper nonzero subtractive ideal for any $k > 1$. By Example~\ref{NnotKuratowski}, $\mathbb{N}_0$ is not Kuratowski. To show that $\mathbb{N}_0$ is nucleus, let $I$ and $J$ be nonzero ideals of $\mathbb{N}_0$. Applying Lemma~\ref{gcdintersection} and Lemma~\ref{closureofnonzeroideal}, we obtain:
	\begin{align*}
		\scl_{\mathbb{N}_0}(I\cap J)
		&= \gcd(I\cap J)\mathbb{N}_0 = \operatorname{lcm}\!\bigl(\gcd(I),\gcd(J)\bigr)\mathbb{N}_0 \\
		&= \gcd(I)\mathbb{N}_0 \cap \gcd(J)\mathbb{N}_0 = \scl_{\mathbb{N}_0}(I)\cap \scl_{\mathbb{N}_0}(J).
	\end{align*}
	If either ideal is zero, the equality holds trivially since $\scl_{\mathbb{N}_0}(\{0\}) = \{0\}$. Hence, $\scl_{\mathbb{N}_0}$ preserves finite intersections and is therefore a nucleus map.
\end{proof}

\section{Multiplicative distributivity of the subtractive closure}\label{sec:sclnotdistributiveidealmultiplication}

In Theorem~\ref{subtractiveclosureproperties}, we proved that for any ideals $I$ and $J$ of a hemiring $H$,
\[
\scl_H(I)\scl_H(J)\subseteq \scl_H(IJ).
\]
The following example shows that the reverse inclusion need not hold. The construction is due to Dulin and Mosher \cite[Example~8]{DulinMosher1972}.

\begin{example}[Dulin-Mosher hemiring]\label{subtractiveoperatornodistributemultiplication}
	Let $H=\mathbb{N}_0$ be the set of nonnegative integers. Define binary operations $\oplus$ and $\odot$ on $H$ by
	\[
	a \oplus b =
	\begin{cases}
		\max(a,b), & \text{if } a\le 6 \text{ or } b\le 6,\\
		a+b, & \text{otherwise},
	\end{cases}
	\]
	and
	\[
	a \odot b =
	\begin{cases}
		\min(a,b), & \text{if } a\le 6 \text{ or } b\le 6,\\
		ab, & \text{otherwise},
	\end{cases}
	\]
	where $a+b$ and $ab$ denote the usual addition and multiplication of integers. Let
	\[
	I=\{0,1,2,3,4,5,6\}\cup\{2k\in\mathbb{N}_0:2k>6\}
	\]
	and
	\[
	J=\{0,1,2,3,4,5,6\}\cup\{3k\in\mathbb{N}_0:3k>6\}.
	\]
	Then $I$ and $J$ are subtractive ideals of $H$. Hence $\scl_H(I)=I$ and $\scl_H(J)=J$. Therefore
	\[
	\scl_H(I)\odot \scl_H(J)=I \odot J.
	\] 
	However, $I\odot J$ is not subtractive. Indeed,
	\[
	96=8\odot12\in I\odot J,\qquad 120=10\odot12\in I\odot J,
	\]
	and $96\oplus24=120$, while $24\notin I\odot J$. Consequently,
	\[
	\scl_H(I) \odot \scl_H(J)\subset \scl_H(I \odot J).
	\]
\end{example}

\begin{proposition}\label{DulinMoshernotKuratowski}
Let $H$, $I$, and $J$ be as in Example \ref{subtractiveoperatornodistributemultiplication}. Then
	\[
	I \oplus J=\mathbb N_0\setminus\{7,11,13\} \quad \text{and} \quad 
	\scl_H(I \oplus J)=\mathbb N_0.\] Therefore, the Dulin-Mosher hemiring is not Kuratowski.
\end{proposition}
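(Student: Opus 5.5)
The plan is to compute $I \oplus J$ directly from the definition of $\oplus$, then use the explicit description of the subtractive closure in Theorem~\ref{subtractiveclosureproperties}(a), and finally compare the result with $\scl_H(I)\oplus\scl_H(J)=I\oplus J$.

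\emph{Step 1: a lower bound for $I\oplus J$.} Since $0 \in I\cap J$ and $0\oplus x=\max(0,x)=x$, we get $I\cup J\subseteq I\oplus J$. Here $I\cup J$ consists of $\{0,\dots,6\}$, all even numbers greater than $6$, and all multiples of $3$ greater than $6$.

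\emph{Step 2: the remaining sums.} Take $a\in I$ and $b\in J$. If $a\le 6$ or $b\le 6$, then $a\oplus b=\max(a,b)\in I\cup J$, so nothing new appears. Otherwise $a\ge 8$ is even, $b\ge 9$ is a multiple of $3$, and $a\oplus b=a+b\ge 17$.

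Conversely, let $n\ge 17$ lie outside $I\cup J$. Then $n$ is odd, so $n-9$ is an even number at least $8$. Hence $n=(n-9)\oplus 9$ with $n-9\in I$ and $9\in J$, so $n\in I\oplus J$.

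The only integers not covered are the odd numbers below $17$ that exceed $6$ and are not multiples of $3$, namely $7,11,13$. None of these is in $I\cup J$, and none is a sum of the second kind, since such sums are at least $17$. Therefore $I\oplus J=\mathbb N_0\setminus\{7,11,13\}$.

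\emph{Step 3: the closure.} The set $K=I\oplus J$ is an ideal, being a sum of ideals, so Theorem~\ref{subtractiveclosureproperties}(a) applies. It suffices to exhibit, for each $h\in\{7,11,13\}$, some $i\in K$ with $h\oplus i\in K$. Choose $i=8$; because both summands exceed $6$, the operation is ordinary addition:
\[
7\oplus 8=15,\qquad 11\oplus 8=19,\qquad 13\oplus 8=21.
\]
All three values lie in $K$. Hence $7,11,13\in\scl_H(K)$, and so $\scl_H(I\oplus J)=\mathbb N_0$.

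\emph{Step 4: conclusion.} By Example~\ref{subtractiveoperatornodistributemultiplication}, $I$ and $J$ are subtractive, so $\scl_H(I)\oplus\scl_H(J)=I\oplus J$. This set is a proper subset of $\mathbb N_0=\scl_H(I\oplus J)$, so condition (K2) fails and the Dulin--Mosher hemiring is not Kuratowski.

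The main obstacle is the exhaustiveness argument in Step 2. One must check that the case split is complete, so that no sum other than the $\max$ cases and the genuine sums $a+b\ge 17$ can occur. This is what guarantees that $7$, $11$ and $13$ are indeed missing from $I\oplus J$.
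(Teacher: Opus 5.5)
Your proof is correct and follows essentially the same route as the paper: you compute $I\oplus J$ by separating the $\max$ case from the genuine-sum case (where the sum is at least $17$), and then use subtractivity to recover $7$, $11$, $13$ in the closure. The differences are cosmetic. You use the single witness $8$ for all three (e.g.\ $7\oplus 8=15\in J$, where the paper uses $10\oplus 7=17$), and you state explicitly that sums with a summand $\le 6$ are maxima lying in $I\cup J$, which the paper leaves implicit.
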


\begin{proof}
Clearly $I \cup J \subseteq I \oplus J$. If $n \ge 17$ is odd, write $n = (n-9) \oplus 9$; because $n-9 \ge 8$ is even, hence it is in $I$ and since $9 \in J$, we obtain $n \in I \oplus J$. If $n \ge 17$ is even, $n = 2k$ with $k \ge 9$, so $n \in I \subseteq I \oplus J$. The even numbers in $\{7,\dots,16\}$ lie in $I$, and the multiples of $3$ in that interval lie in $J$.  
	For any $x \in I$ with $x > 6$ we have $x \ge 8$, and for any $y \in J$ with $y > 6$ we have $y \ge 9$; thus $x \oplus y = x+y \ge 17$. Consequently no pair from $I$ and $J$ can sum to $7, 11,$ or $13$. Therefore
	\[
	I \oplus J = (I \cup J) \cup \{n \in \mathbb N_0 : n \ge 17\} 
	= \mathbb N_0 \setminus \{7, 11, 13\}.
	\]
	Now $10, 17 \in I \oplus J$ and $10 \oplus 7 = 17$ forces $7 \in \scl_H(I \oplus J)$;  
	$8, 19 \in I \oplus J$ and $8 \oplus 11 = 19$ gives $11 \in \scl_H(I \oplus J)$;  
	$8, 21 \in I \oplus J$ and $8 \oplus 13 = 21$ gives $13 \in \scl_H(I \oplus J)$.  
	Hence $\scl_H(I \oplus J) = \mathbb N_0$, and we have
	\[
	\scl_H(I) \oplus \scl_H(J) = I \oplus J 
	= \mathbb N_0 \setminus \{7,11,13\} 
	\subset \mathbb N_0 = \scl_H(I \oplus J),
	\]
	so the Dulin--Mosher hemiring is not Kuratowski.
\end{proof}

\begin{theorem}\label{DMidealsclassification}
Let $H$ be the Dulin-Mosher hemiring. Let $I$ be an ideal of $H$.
	\begin{enumerate}
		\item If $I$ contains at least one element strictly greater than $6$, then $I$ contains $D_6 = \{0,1,\dots,6\}$, called a large ideal of $H$.
		\item If all elements of $I$ are $\leq 6$, then $ I = D_m = \{0,1,\dots,m\}$ for some $0 \le m \le 6$. In this case $I$ is subtractive, called a small ideal of $H$.
	\end{enumerate}
\end{theorem}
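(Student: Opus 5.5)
The proof rests on one observation about the Dulin--Mosher multiplication. For $h \le 6$ and $a \in H$ we have $h \odot a = \min(h,a)$, because the case condition ``$h \le 6$ or $a \le 6$'' holds. The plan is to use this to show that ideals are closed downward at the level of small elements, and then to verify subtractivity of small ideals directly from the definition of $\oplus$.

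\textbf{Part (1).} Let $I$ be an ideal containing some $x > 6$. For each $h \in D_6 = \{0,1,\dots,6\}$, multiplying by $h$ gives
\[
h \odot x = \min(h,x) = h,
\]
since $h \le 6 < x$. The ideal absorption property then gives $h \in I$. Hence $D_6 \subseteq I$.

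\textbf{Part (2), shape of $I$.} Suppose every element of $I$ is at most $6$. Set $m = \max I$; it exists because $I$ is nonempty and contained in $D_6$. For any $h \le m$, we have $h \le 6$, so $h \odot m = \min(h,m) = h$. This places $h$ in $I$, so $D_m \subseteq I \subseteq D_m$, and therefore $I = D_m$. For completeness I would also check that each $D_m$ really is an ideal.
\begin{itemize}
\item It is closed under $\oplus$, since for elements $\le 6$ the sum is $\max$, which stays in $D_m$.
\item For $a \in D_m$ and any $h \in H$, the condition $a \le 6$ gives $h \odot a = \min(h,a) \le a \le m$, so $h \odot a \in D_m$.
\end{itemize}

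\textbf{Part (2), subtractivity.} Suppose $a \in D_m$ and $a \oplus b \in D_m$. Since $a \le 6$, the definition gives $a \oplus b = \max(a,b)$. Then $b \le \max(a,b) \le m$, so $b \in D_m$. This shows $D_m$ is subtractive.

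The only place needing care is to confirm that the case condition of $\oplus$ and $\odot$ (``$a \le 6$ or $b \le 6$'') is met at every step. That is guaranteed because one argument always lies in $D_6$. I do not expect any real obstacle here.
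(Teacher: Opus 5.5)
Your proof is correct and follows essentially the same route as the paper: absorption via $h \odot x = \min(h,x) = h$ for $h \le 6$ gives both $D_6 \subseteq I$ and $I = D_m$, and subtractivity follows because $\oplus$ reduces to $\max$. The differences are minor. You additionally check that each $D_m$ is an ideal, and you get $a \oplus b = \max(a,b)$ from $a \le 6$ rather than, as the paper does, from $a \oplus b \le 6$; both justifications are valid.
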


\begin{proof}
(1) Let $I$ be an ideal of $H$. If $I$ contains any element $m > 6$, then for every $k \le 6$, we have $k \odot m = \min(k,m) = k \in I$. This forces $D_6 \subseteq I$.
	
\smallskip \noindent (2) If $I$ contains no elements greater than $6$, it must possess a maximum element $m \le 6$. For any $k \le m$, we again have $k \odot m = \min(k,m) = k \in I$. Since $m$ is maximal, this explicitly forces $I = \{0,1,\dots,m\} = D_m$. To verify that every $D_m$ is subtractive, suppose $a \oplus b \in D_m$ and $a \in D_m$. Because $a \oplus b \le m \le 6$, the operation cannot occur in the standard addition tier ($>6$). Thus, the operation evaluates as $a \oplus b = \max(a,b) \le m$. This immediately dictates $b \le m$, proving $b \in D_m$.
\end{proof}

\begin{theorem}\label{DMlargeidealclosure}
	Let $H$ be the Dulin-Mosher hemiring and $I$ be a large ideal of $H$. Let $I_{>6} = \{x \in I : x > 6\}$ and define $d = \gcd(I_{>6})$. Then the subtractive closure of $I$ is given by
	\[
	\scl_H(I) = D_6 \cup \{ x \in \mathbb{N}_0 : x > 6 \text{ and } d \mid x \},
	\]
	where $D_6 = \{0,1,2,3,4,5,6\}$.
\end{theorem}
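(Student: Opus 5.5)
Let $K = D_6 \cup \{x > 6 : d \mid x\}$. The plan is to show two things: that $K$ is a subtractive ideal containing $I$, so $\scl_H(I) \subseteq K$; and that every element of $K$ lies in $\scl_H(I)$, using the description $\scl_H(I)=\{h : h\oplus i\in I \text{ for some } i\in I\}$ from Theorem~\ref{subtractiveclosureproperties}(a).

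\textbf{Step 1: $K$ is a subtractive ideal containing $I$.} Since $I$ is large, Theorem~\ref{DMidealsclassification}(1) gives $D_6 \subseteq I$. By definition of $d$, every element of $I_{>6}$ is divisible by $d$, so $I \subseteq K$.

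For closure of $K$ under $\oplus$, take $a,b\in K$.
\begin{itemize}
\item If one of them is $\le 6$, then $a\oplus b=\max(a,b)\in\{a,b\}\subseteq K$.
\item Otherwise $d\mid a$ and $d\mid b$, so $d\mid a+b$, and $a+b>6$.
\end{itemize}
For absorption, take $h\in H$ and $a\in K$.
\begin{itemize}
\item If $h\le 6$ or $a\le 6$, then $h\odot a=\min(h,a)$, which is either $a$ or some value $\le 6$; in both cases it lies in $K$.
\item Otherwise $h\odot a=ha$ is divisible by $d$ and exceeds $6$.
\end{itemize}
For subtractivity, suppose $a\oplus b\in K$ and $a\in K$.
\begin{itemize}
\item If $b\le 6$, then $b\in D_6\subseteq K$.
\item If $b>6$ and $a\le 6$, then $a\oplus b=b\in K$.
\item If both exceed $6$, then $a\oplus b=a+b>6$, so $d\mid a+b$ and $d\mid a$, hence $d\mid b$.
\end{itemize}

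\textbf{Step 2: $K \subseteq \scl_H(I)$.} Elements of $D_6$ already lie in $I$. The main point is to obtain every $x>6$ with $d\mid x$.

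First I would show that $I_{>6}$ is closed under ordinary addition and under multiplication by integers $>6$, since $\oplus$ and $\odot$ coincide with the usual operations on $\{x>6\}$. Then $I_{>6}\cup\{0\}$ is closed under $+$, so it is an additive submonoid of $d\mathbb{N}_0$ with $\gcd(I_{>6})=d$. Dividing by $d$ gives a numerical semigroup, which by Lemma 2.1 of \cite{RosalesSanchez2009} is cofinite, exactly as in Lemma~\ref{closureofnonzeroideal}. Hence $I$ contains $md$ for all $m\ge N$, for some $N$.

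Now let $x>6$ with $d\mid x$. Choose $i=md\in I$ with $i>6$ and $m$ large enough that $(m d + x)/d\ge N$. Since both $x$ and $i$ exceed $6$, we have $x\oplus i=x+i=(m+x/d)d\in I$. Thus $x\in\scl_H(I)$.

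\textbf{Expected obstacle.} The delicate point is the reduction to numerical semigroups. Only the additive closure of $I_{>6}$ is needed, and it must be checked that sums of elements $>6$ use ordinary addition, which holds by the definition of $\oplus$. One should also confirm that $d$ is finite and well defined; this holds because $I_{>6}\neq\emptyset$ by largeness. Combining the two inclusions with the minimality of $\scl_H(I)$ gives equality.
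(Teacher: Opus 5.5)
Your proposal is correct and follows essentially the same route as the paper: you show that $K$ is a subtractive ideal containing $I$, then use cofiniteness of $\frac{1}{d}(I_{>6}\cup\{0\})$ (via the numerical-semigroup lemma) to find a large multiple $md\in I$ with $x\oplus md=x+md\in I$, which places $x$ in $\scl_H(I)$. Your justification of the numerical-semigroup step, through the additive closure of $I_{>6}\cup\{0\}$, is slightly more explicit than the paper's appeal to analogy with Lemma~\ref{closureofnonzeroideal}.
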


\begin{proof}
	Let $S = D_6 \cup \{ x \in \mathbb{N}_0 : x > 6 \text{ and } d \mid x \}$. We first verify that $S$ is a subtractive ideal of $H$. Clearly, $D_6 \subseteq S$. For any $a, b \in S$, if either is $\le 6$, their sum in $H$ is $\max(a,b)$, which remains in $S$. If both $a, b > 6$, then $a \oplus b = a + b$. Since both are multiples of $d$, their sum is a multiple of $d$ strictly greater than $6$, so $a \oplus b \in S$. For multiplication, let $h \in H$ and $a \in S$. If $h \le 6$ or $a \le 6$, then $h \odot a = \min(h, a) \le 6 \in D_6 \subseteq S$. If $h > 6$ and $a > 6$, then $a \in S$ implies $d \mid a$. Thus, $h \odot a = ha > 6$ is also a multiple of $d$, which yields $ha \in S$. Therefore, $S$ is closed under multiplication, establishing that $S$ is an ideal of $H$. 
	
	To see that $S$ is subtractive, suppose $a \oplus b \in S$ and $a \in S$. If $a \le 6$ or $b \le 6$, then $a \oplus b = \max(a,b)$. If $a \le 6$, then $\max(a,b) \in S$ forces $b \in S$. If $b \le 6$, then $b \in D_6 \subseteq S$. If both $a, b > 6$, then $a \oplus b = a+b \in S$. This implies $a+b = md$ and $a = nd$ for some integers $m > n$, forcing $b = (m-n)d$. Since $b > 6$, it follows that $b \in S$. Hence, $S$ is a subtractive ideal. By construction, $I_{>6}$ consists entirely of multiples of $d$, so $I \subseteq S$. The minimality of the subtractive closure dictates $\scl_H(I) \subseteq S$.
	
	Conversely, let $x \in S$. If $x \le 6$, then $x \in D_6 \subseteq I \subseteq \scl_H(I)$. If $x > 6$, then $x = kd$ for some integer $k$. By the definition of $d = \gcd(I_{>6})$, and because $I_{>6}$ is closed under addition and absorbs multiplication by elements $>6$, the scaled set $\frac{1}{d}I_{>6}$ is cofinite in $\mathbb{N}_0$ (analogous to the numerical semigroup property established in Lemma \ref{closureofnonzeroideal}). Thus, $I_{>6}$ contains all sufficiently large multiples of $d$. Choose an integer $N$ large enough such that $Nd \in I_{>6}$ and $(N+k)d \in I_{>6}$, with $Nd > 6$. Then in $H$ we have
	\[
	x \oplus Nd = kd + Nd = (k+N)d \in I.
	\]
	Because $Nd \in I$ and $x \oplus Nd \in I$, the definition of the subtractive closure directly yields $x \in \scl_H(I)$. This proves $S \subseteq \scl_H(I)$, establishing the desired equality.
\end{proof}

\begin{theorem}\label{DulinMosherIsNucleus}
	The Dulin-Mosher hemiring is a nucleus hemiring.
\end{theorem}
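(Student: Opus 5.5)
We will show that for all ideals $I,J$ of the Dulin--Mosher hemiring $H$ one has $\scl_H(I\cap J)=\scl_H(I)\cap\scl_H(J)$. Theorem~\ref{subtractiveclosureproperties}(c) already gives the inclusion $\subseteq$, so only the reverse inclusion needs proof. The plan is a case split based on the classification in Theorem~\ref{DMidealsclassification}: every ideal is either small, i.e.\ some $D_m$ with $m\le 6$ (these are subtractive), or large, i.e.\ it contains $D_6$ together with some element greater than $6$.

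\emph{Case 1: at least one of $I,J$ is small.} Say $I=D_m$. Since $D_m$ is subtractive, $\scl_H(I)=D_m$. Small ideals are totally ordered and lie inside every large ideal. Hence $J$ and $\scl_H(J)$ are comparable with $D_m$, and so is $I\cap J$. Concretely, if $J$ is large then $I\cap J=D_m$, and $\scl_H(J)\supseteq D_6\supseteq D_m$. Therefore both sides equal $D_m$. If $J=D_n$ is also small, both sides equal $D_{\min(m,n)}$.

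\emph{Case 2: both $I$ and $J$ are large.} Put $d_1=\gcd(I_{>6})$ and $d_2=\gcd(J_{>6})$. By Theorem~\ref{DMlargeidealclosure},
\[
\scl_H(I)\cap\scl_H(J)=D_6\cup\{x>6:\ \operatorname{lcm}(d_1,d_2)\mid x\}.
\]

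\textbf{Step 1: $I\cap J$ is large.} The proof of Theorem~\ref{DMlargeidealclosure} showed that $I_{>6}$ contains all sufficiently large multiples of $d_1$, and likewise $J_{>6}$ contains all sufficiently large multiples of $d_2$. A more elementary alternative: for $a\in I_{>6}$ and $b\in J_{>6}$, the product $a\odot b=ab>6$ lies in both ideals. Either way, $I\cap J$ is large.

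\textbf{Step 2: compute the gcd of the intersection.} We need $g=\gcd((I\cap J)_{>6})=\operatorname{lcm}(d_1,d_2)$, which is the analogue of Lemma~\ref{gcdintersection}.
\begin{itemize}
\item One direction: every element of $(I\cap J)_{>6}$ is divisible by both $d_1$ and $d_2$, so $\ell=\operatorname{lcm}(d_1,d_2)$ divides $g$.
\item Other direction: by cofiniteness, $I\cap J$ contains the consecutive multiples $N\ell$ and $(N+1)\ell$ for $N$ large. Hence $g$ divides their difference $\ell$.
\end{itemize}

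\textbf{Step 3: conclude.} Applying Theorem~\ref{DMlargeidealclosure} to the large ideal $I\cap J$ gives
\[
\scl_H(I\cap J)=D_6\cup\{x>6:\ g\mid x\},
\]
which is the desired equality.

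The main obstacle is the cofiniteness claim: $\frac1{d}I_{>6}$ contains all large integers. The proof of Theorem~\ref{DMlargeidealclosure} only gestures at this, so I would justify it explicitly.
\begin{itemize}
\item $I_{>6}$ is closed under ordinary addition, since for $a,b>6$ we have $a\oplus b=a+b$.
\item Therefore $I_{>6}\cup\{0\}$ is a submonoid of $(\mathbb N_0,+)$ with gcd $d$.
\item Dividing by $d$ gives a submonoid of gcd $1$, i.e.\ a numerical semigroup. By \cite[Lemma 2.1]{RosalesSanchez2009} it is cofinite.
\end{itemize}
Once this is secured for $I$, $J$, and $I\cap J$, the rest is bookkeeping.
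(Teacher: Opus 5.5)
Your proposal is correct and follows essentially the same route as the paper. You split into cases via Theorem~\ref{DMidealsclassification}, merging the paper's two cases that involve a small ideal into one; in the large--large case you use Theorem~\ref{DMlargeidealclosure} together with $\gcd((I\cap J)_{>6})=\operatorname{lcm}(d_1,d_2)$. You also check explicitly that $I\cap J$ is large (e.g.\ via $a\odot b=ab\in I\cap J$), and that $\frac{1}{d}(I_{>6}\cup\{0\})$ is a numerical semigroup because $I_{>6}$ is closed under ordinary addition; both are useful, since the paper leaves these points implicit.
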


\begin{proof}
	Let $H$ be the Dulin-Mosher hemiring and $I, J$ be ideals of $H$. We must show that $\scl_H(I \cap J) = \scl_H(I) \cap \scl_H(J)$. By Theorem \ref{DMidealsclassification}, every ideal of $H$ is either a small ideal $D_m$ ($m \le 6$) or a large ideal containing $D_6$. We consider three exhaustive cases:
	
	\medskip\noindent\textbf{Case 1: Both ideals are small.} 
	Let $I = D_m$ and $J = D_n$. Because the small ideals are totally ordered by inclusion, $I \cap J = D_{\min(m,n)}$. Since all small ideals are inherently subtractive, their subtractive closures are themselves. Thus,
	\[ 
	\scl_H(D_m \cap D_n) = D_{\min(m,n)} = D_m \cap D_n = \scl_H(D_m) \cap \scl_H(D_n). 
	\]
	
	\medskip\noindent\textbf{Case 2: One ideal is small and one is large.} 
	Let $I = D_m$ for $m \le 6$, and $J$ be a large ideal. By Theorem \ref{DMidealsclassification}, $D_6 \subseteq J$, which strictly implies $D_m \subseteq J$. Therefore, $I \cap J = D_m$. Taking the subtractive closure, we have $\scl_H(I \cap J) = \scl_H(D_m) = D_m$. 
	
	Conversely, the subtractive closure $\scl_H(J)$ is a subtractive ideal containing $J$, meaning it also contains $D_6$. Consequently, $\scl_H(I) \cap \scl_H(J) = D_m \cap \scl_H(J) = D_m$. Thus, the equality holds.
	
	\medskip\noindent\textbf{Case 3: Both ideals are large.} 
	Let $I$ and $J$ be large ideals. By Theorem \ref{DMlargeidealclosure}, we can express their subtractive closures as $\scl_H(I) = D_6 \cup d_1 \mathbb{N}_{>6}$ and $\scl_H(J) = D_6 \cup d_2 \mathbb{N}_{>6}$, where $d_1 = \gcd(I_{>6})$, $d_2 = \gcd(J_{>6})$, and the notation $k \mathbb{N}_{>6}$ denotes the set $\{x \in \mathbb{N}_0 : x > 6 \text{ and } k \mid x\}$. 
	
	The intersection of their subtractive closures is:
	\begin{align*}
		\scl_H(I) \cap \scl_H(J) &= (D_6 \cup d_1 \mathbb{N}_{>6}) \cap (D_6 \cup d_2 \mathbb{N}_{>6}) \\
		&= D_6 \cup (d_1 \mathbb{N}_{>6} \cap d_2 \mathbb{N}_{>6}) \\
		&= D_6 \cup \operatorname{lcm}(d_1, d_2) \mathbb{N}_{>6}.
	\end{align*}
	
	Now consider the ideal $I \cap J$. Any element in $(I \cap J)_{>6}$ belongs to both $I_{>6}$ and $J_{>6}$, meaning it is a multiple of both $d_1$ and $d_2$. Thus, it is a multiple of $\ell = \operatorname{lcm}(d_1, d_2)$. This dictates that $\ell$ divides $\gcd((I \cap J)_{>6})$. 
	
	On the other hand, analogous to the proof of Lemma \ref{gcdintersection}, $I_{>6}$ and $J_{>6}$ contain all sufficiently large multiples of $d_1$ and $d_2$, respectively. Hence, their intersection contains all sufficiently large multiples of $\ell$. This forces the greatest common divisor to be exactly $\ell$, meaning $\gcd((I \cap J)_{>6}) = \ell$. 
	
	Applying Theorem \ref{DMlargeidealclosure} to the large ideal $I \cap J$ with its greatest common divisor $\ell$, we obtain:
	\[
	\scl_H(I \cap J) = D_6 \cup \ell \mathbb{N}_{>6} = \scl_H(I) \cap \scl_H(J).
	\]
	Since the equality holds in all possible combinations, $H$ acts as a nucleus hemiring.
\end{proof}

\begin{theorem}\label{NasehpourTsemiringdistribute}
	Let $T = [0,1]$ be the semiring introduced in Theorem~\ref{NasehpourTsemiring}. The subtractive closure operator $\scl_T$ distributes over ideal multiplication. That is, for any ideals $I, J \in \Id(T)$,
	$$ \scl_T(I \odot J) = \scl_T(I) \odot \scl_T(J). $$
\end{theorem}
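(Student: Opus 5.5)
The plan is to compute the ideal product $I \odot J$ explicitly in $T$ and then split into cases according to whether $I$ or $J$ equals $T$. Recall that $I \odot J$ is the set of finite sums of products $a \odot b$ with $a \in I$, $b \in J$. Addition in $T$ is $\max$ and $T$ is totally ordered, so a finite sum of such products equals one of its summands. Hence $I \odot J = \{a \odot b : a \in I,\ b \in J\}$, exactly as in the argument of Lemma~\ref{lem:sum_is_union}.

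\emph{Case 1: both ideals are proper.} By Theorem~\ref{NasehpourTsemiring}(1), $I, J \subseteq [0,1)$. Then every product $a \odot b$ vanishes, so $I \odot J = \{0\}$ and $\scl_T(I \odot J) = \{0\}$ (the zero ideal is $[0,0]$, which is subtractive by Theorem~\ref{NasehpourTsemiring}(2)). For the right-hand side, I need to know that $\scl_T(I)$ and $\scl_T(J)$ are still contained in $[0,1)$; then their product is also $\{0\}$. This is the only point needing care, and I expect it to be the main (mild) obstacle. I will argue it from Theorem~\ref{NasehpourTsemiring}(3) with $s = \sup I \le 1$:
\begin{itemize}
\item if $s \in I$, then $s < 1$ because $1 \notin I$, so $\scl_T(I) = [0,s] \subseteq [0,1)$;
\item if $s \notin I$, then $\scl_T(I) = [0,s) \subseteq [0,1)$.
\end{itemize}
The point is that closure cannot reach $1$ even when $\sup I = 1$. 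The same argument applies to $J$, so both sides equal $\{0\}$.

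\emph{Case 2: exactly one ideal equals $T$, say $I = T$ and $J \subseteq [0,1)$.} Since $1 \odot j = j$ and $t \odot j = 0 \in J$ for $t < 1$, we get $T \odot J = J$. Thus $\scl_T(I \odot J) = \scl_T(J)$. On the other side, $\scl_T(T) = T$, and for any ideal $K$ we have $T \odot K = K$: the inclusion $\supseteq$ comes from $1 \odot k = k$, and $\subseteq$ from absorption together with closure under addition. Since $\scl_T(J)$ is an ideal, $\scl_T(I) \odot \scl_T(J) = T \odot \scl_T(J) = \scl_T(J)$, so the two sides agree.

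\emph{Case 3: $I = J = T$.} Then $T \odot T = T$, and both sides equal $T$.

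Since these cases exhaust $\Id(T)$, the equality $\scl_T(I \odot J) = \scl_T(I) \odot \scl_T(J)$ holds in general. Alternatively, one could invoke Theorem~\ref{subtractiveclosureproperties}(d) for the inclusion $\supseteq$ and verify only $\subseteq$ in each case, but the direct computation above already gives equality.
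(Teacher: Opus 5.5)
Your proposal is correct and takes essentially the same approach as the paper: split on whether an ideal equals $T$, note that proper ideals multiply to $\{0\}$ while their closures stay inside $[0,1)$ because $1 \notin I$, and use that $T$ acts as the identity on ideals. Your separate treatment of the case $I = J = T$ and your explicit description of $I \odot J$ are only cosmetic refinements of the paper's two-case argument.
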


\begin{proof}
	By Theorem~\ref{NasehpourTsemiring}(1), every ideal of $T$ is either the entire semiring $T$ or a subset of $[0, 1)$. We consider two exhaustive cases:
	
	\medskip\noindent\textbf{Case 1: Both $I$ and $J$ are proper ideals.} 
	If $I \neq T$ and $J \neq T$, then $I \subseteq [0,1)$ and $J \subseteq [0,1)$. By the definition of multiplication in $T$, $a \odot b = 0$ for all $a \in I$ and $b \in J$. Thus, the ideal product collapses to $I \odot J = \{0\}$, and its subtractive closure is $\scl_T(I \odot J) = \{0\}$. 
	
	Now consider their individual closures. By Theorem~\ref{NasehpourTsemiring}(3), the subtractive closure of any ideal in $[0,1)$ is bounded by its supremum. Crucially, even if the supremum is $1$, the closure remains open at $1$ because $1 \notin I$. Therefore, $1 \notin \scl_T(I)$ and $1 \notin \scl_T(J)$, meaning both closures are strictly contained in $[0, 1)$. Consequently, multiplying them yields:
	$$ \scl_T(I) \odot \scl_T(J) = \{0\} = \scl_T(I \odot J). $$
	
	\medskip\noindent\textbf{Case 2: At least one ideal is $T$.} 
	Without loss of generality, let $I = T$, and let $J$ be any ideal in $\Id(T)$. Because $T$ is a semiring with a multiplicative identity ($1 \in T$), it universally holds that $T \odot J = J$. Taking the subtractive closure of both sides gives $\scl_T(T \odot J) = \scl_T(J)$. Furthermore, since $\scl_T(J)$ is itself an ideal of $T$, multiplying it by $T$ leaves it unchanged, yielding $T \odot \scl_T(J) = \scl_T(J)$. Because $\scl_T(T) = T$, we can substitute to find:
	\[ \scl_T(T \odot J) = \scl_T(J) = \scl_T(T) \odot \scl_T(J). \] Since equality holds in both cases, $\scl_T$ distributes over ideal multiplication.
\end{proof}

\begin{proposition}\label{austereandentiremultiplicativedistributivity} 
	Let $H$ be a hemiring. Then $\scl_H(I)\scl_H(J) = \scl_H(IJ)$ for all $I,J\in\Id(H)$ if either
	\begin{enumerate}[\rm (a)]
		\item $H$ is subtractive, or
		\item $H$ is austere, entire, and $H^2=H$.
	\end{enumerate}
\end{proposition}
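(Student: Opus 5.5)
Case (a) is immediate. If $H$ is subtractive, every ideal is subtractive and $\scl_H$ is the identity on $\Id(H)$. Then $\scl_H(I)\scl_H(J)=IJ=\scl_H(IJ)$. (This uses that $IJ$ is an ideal, so it is itself subtractive.)

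For case (b), I would use the defining property of austerity already exploited in the proof of Proposition~\ref{austereisKuratowski}: the subtractive closure of any nonzero ideal is $H$, while $\scl_H(\{0\})=\{0\}$. I would split according to whether $I$ or $J$ is zero.

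\emph{Case 1: $I=\{0\}$ or $J=\{0\}$.} Say $I=\{0\}$. Then $IJ=\{0\}$, so $\scl_H(IJ)=\{0\}$. On the other side, $\scl_H(I)\scl_H(J)=\{0\}\scl_H(J)=\{0\}$, since $0$ is multiplicatively absorbing.

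\emph{Case 2: both $I$ and $J$ are nonzero.} Here entireness is used. Pick nonzero $a\in I$ and $b\in J$; then $ab\neq 0$ and $ab\in IJ$, so $IJ$ is a nonzero ideal. Austerity gives $\scl_H(IJ)=H$ and also $\scl_H(I)=\scl_H(J)=H$. The left-hand side is therefore $HH=H^2$, the ideal generated by all products (finite sums of products). The hypothesis $H^2=H$ then yields $\scl_H(I)\scl_H(J)=H=\scl_H(IJ)$.

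The only point needing care is the meaning of the ideal product $IJ$. I would state that $IJ$ denotes the set of finite sums $\sum i_kj_k$, as is standard. With that convention, $H\cdot H=H^2$ literally, and the hypothesis $H^2=H$ is exactly what is required. I would also note why the hypothesis cannot be dropped: without it, an austere entire hemiring with $H^2\subsetneq H$ would break the equality in Case 2. Beyond this, I expect no real obstacle; the argument is a direct case check.
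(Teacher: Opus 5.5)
Your proof is correct and follows the paper's argument essentially verbatim. Part (a) holds because $\scl_H$ is the identity. Part (b) splits into the zero case and the nonzero case: entireness gives $IJ\neq\{0\}$, austerity makes $\scl_H(I)=\scl_H(J)=\scl_H(IJ)=H$, and the hypothesis $H^2=H$ then yields the equality.
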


\begin{proof}
	(a) Trivial, since $\scl_H$ is the identity map.
	
	(b) If $I=\{0\}$ or $J=\{0\}$, both sides are $\{0\}$. Otherwise $I,J\neq\{0\}$ implies $IJ\neq\{0\}$ by entirety, so austerity gives $\scl_H(I)=\scl_H(J)=\scl_H(IJ)=H$. Hence $\scl_H(I)\scl_H(J)=H\cdot H=H^2=H=\scl_H(IJ)$.
\end{proof}

\begin{theorem}\label{Ndistributesmultiplicationthm}
	The subtractive closure operator $\scl_{\mathbb{N}_0}$ on the standard semiring $\mathbb{N}_0$ distributes over the multiplication of ideals, i.e., for any ideals $I$ and $J$ of $\mathbb{N}_0$,
	\[
	\scl_{\mathbb{N}_0}(I)\scl_{\mathbb{N}_0}(J) = \scl_{\mathbb{N}_0}(IJ).
	\]
\end{theorem}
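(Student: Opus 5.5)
The plan is to reduce everything to greatest common divisors via Lemma~\ref{closureofnonzeroideal}. First we dispose of the trivial case: if $I=\{0\}$ or $J=\{0\}$, then $IJ=\{0\}$. Since $\scl_{\mathbb{N}_0}(\{0\})=\{0\}$ (the zero ideal is subtractive), both sides equal $\{0\}$. So assume $I,J$ are nonzero and set $d_1=\gcd(I)$ and $d_2=\gcd(J)$. By Lemma~\ref{closureofnonzeroideal}, $\scl_{\mathbb{N}_0}(I)=d_1\mathbb{N}_0$ and $\scl_{\mathbb{N}_0}(J)=d_2\mathbb{N}_0$. Their ideal product consists of finite sums of products $(d_1a)(d_2b)$, so it is exactly $d_1d_2\mathbb{N}_0$. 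Since $\mathbb{N}_0$ is entire, $IJ$ is a nonzero ideal, and Lemma~\ref{closureofnonzeroideal} gives $\scl_{\mathbb{N}_0}(IJ)=\gcd(IJ)\,\mathbb{N}_0$. The theorem therefore reduces to the arithmetic identity
\[
\gcd(IJ)=\gcd(I)\gcd(J).
\]

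To prove this identity, we first note that every element of $IJ$ is a sum of products $ij$ with $d_1\mid i$ and $d_2\mid j$, so $d_1d_2$ divides $\gcd(IJ)$. For the reverse divisibility, we use the cofiniteness argument already employed in Lemma~\ref{gcdintersection}. The sets $\frac1{d_1}I$ and $\frac1{d_2}J$ are numerical semigroups (Lemma 2.1 of \cite{RosalesSanchez2009}), hence cofinite. Choose $n$ with $d_1n, d_1(n+1)\in I$ and $m$ with $d_2m\in J$. Then $d_1d_2nm$ and $d_1d_2(n+1)m$ lie in $IJ$, so $\gcd(IJ)$ divides their difference $d_1d_2m$. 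Applying the same trick on the $J$ side, with $d_2m,d_2(m+1)\in J$ and $d_1n\in I$, shows that $\gcd(IJ)$ divides $d_1d_2n$. Hence $\gcd(IJ)$ divides $d_1d_2\gcd(n,m)$.

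The main obstacle is the last step, since $\gcd(n,m)$ need not be $1$. The cleaner route is to take products $(d_1a)(d_2b)$ for all large $a,b$, all of which lie in $IJ$. Taking $a$ fixed and both $b$ and $b+1$ large shows that $\gcd(IJ)$ divides $d_1d_2a$ for every sufficiently large $a$. Taking two consecutive such values $a$ and $a+1$ then gives $\gcd(IJ)\mid d_1d_2$. Combining the two divisibilities yields $\gcd(IJ)=d_1d_2$, and hence
\[
\scl_{\mathbb{N}_0}(IJ)=d_1d_2\mathbb{N}_0=\scl_{\mathbb{N}_0}(I)\scl_{\mathbb{N}_0}(J).
\]
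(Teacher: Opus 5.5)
Your proof is correct, and its skeleton matches the paper's: dispose of the zero case, use Lemma~\ref{closureofnonzeroideal} to reduce everything to $\gcd(IJ)=\gcd(I)\gcd(J)$, and get $d_1d_2\mid\gcd(IJ)$ from the form of the elements of $IJ$. The only divergence is in the reverse divisibility.

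\begin{itemize}
\item The paper uses B\'ezout's identity in $\mathbb{Z}$. It writes $d_1=\sum_k c_k i_k$ and $d_2=\sum_s b_s j_s$, with finitely many $i_k\in I$, $j_s\in J$ and integer coefficients. Multiplying these and noting that each $i_kj_s\in IJ$ gives $\gcd(IJ)\mid d_1d_2$.
\item You instead reuse the cofiniteness of $\frac{1}{d_1}I$ and $\frac{1}{d_2}J$ (the tool behind Lemma~\ref{gcdintersection}). You then take differences of products of consecutive large multiples, first in $b$ and then in $a$.
\end{itemize}

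The B\'ezout route is shorter. It needs no numerical-semigroup input, only that a gcd is an integer combination of finitely many elements. Your route works entirely with differences of elements of $IJ$ and reuses machinery the paper has already set up.

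Your intermediate bound $\gcd(IJ)\mid d_1d_2\gcd(n,m)$ was not a real obstacle: choosing $m=n+1$ with $n$ large makes $\gcd(n,m)=1$ and finishes immediately. Your double-difference argument is also fine as written.

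You are slightly more careful than the paper in noting that $IJ\neq\{0\}$, by entireness of $\mathbb{N}_0$, before applying Lemma~\ref{closureofnonzeroideal} to $IJ$.
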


\begin{proof}
	If $I = \{0\}$ or $J = \{0\}$, then $IJ = \{0\}$, and both sides trivially equal $\{0\}$. Now assume $I \neq \{0\}$ and $J \neq \{0\}$. By Lemma~\ref{closureofnonzeroideal}, the subtractive closure of any nonzero ideal $A$ of $\mathbb{N}_0$ is given by $\scl_{\mathbb{N}_0}(A) = \gcd(A)\mathbb{N}_0$. Let $d_1 = \gcd(I)$ and $d_2 = \gcd(J)$. Then we have:
	\[
	\scl_{\mathbb{N}_0}(I)\scl_{\mathbb{N}_0}(J) = (d_1 \mathbb{N}_0)(d_2 \mathbb{N}_0) = d_1 d_2 \mathbb{N}_0.
	\]
	To compute $\scl_{\mathbb{N}_0}(IJ)$, we determine $\gcd(IJ)$. Since every element of $IJ$ is a finite sum of products of the form $i \cdot j$ with $i \in I$ and $j \in J$, and since $d_1 \mid i$ and $d_2 \mid j$, it follows that $d_1 d_2 \mid ij$. Thus, $d_1 d_2$ is a common divisor of all $ij \in IJ$, which implies $d_1 d_2 \mid \gcd(IJ)$. Conversely, by B\'{e}zout's identity over $\mathbb{Z}$, there exist finite sets of elements $\{i_1, \dots, i_n\} \subseteq I$ and $\{j_1, \dots, j_m\} \subseteq J$ along with integers $c_1, \dots, c_n, b_1, \dots, b_m \in \mathbb{Z}$ such that:
	\[
	\sum_{k=1}^n c_k i_k = d_1 \quad \text{and} \quad \sum_{s=1}^m b_s j_s = d_2.
	\]
	Multiplying these two equations yields:
	\[
	d_1 d_2 = \sum_{k=1}^n \sum_{s=1}^m (c_k b_s) (i_k j_s).
	\]
	Since each product $i_k j_s \in IJ$, the greatest common divisor $\gcd(IJ)$ must divide every such product, and consequently, it must divide their integer linear combination $d_1 d_2$. Since both $d_1 d_2$ and $\gcd(IJ)$ are positive integers, we conclude that $\gcd(IJ) = d_1 d_2$. Thus,
	\[
	\scl_{\mathbb{N}_0}(IJ) = \gcd(IJ)\mathbb{N}_0 = d_1 d_2 \mathbb{N}_0 = \scl_{\mathbb{N}_0}(I)\scl_{\mathbb{N}_0}(J),
	\]
	which completes the proof.
\end{proof}

\begin{theorem}\label{maxplusalgebraideals}
Consider the max-plus algebra over $S = \mathbb{N}_0 \cup \{-\infty\}$ equipped with addition $x \oplus y = \max(x,y)$ and multiplication $x \odot y = x + y$. Then the following statements hold:

\begin{enumerate}
	
	\item The ideals of $S$ are exactly
	\[
	I_n = \{k \in \mathbb{N}_0 \mid k \ge n\} \cup \{-\infty\}, \quad n \in \mathbb{N}_0,
	\]
	together with the zero ideal $I_{-\infty} = \{-\infty\}$. These ideals form a chain:
	\[
	I_{-\infty} \subset \cdots \subset I_3 \subset I_2 \subset I_1 \subset I_0 = S.
	\] In particular, $S$ is both Kuratowski and nucleus.
	
	\item $S$ is an austere information algebra. In particular, the subtractive closure $\scl_S$ distributes over the ideal multiplication. 
\end{enumerate}

\end{theorem}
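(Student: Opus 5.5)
The plan is to check the semiring axioms first and then classify the ideals. In $S$ the additive identity is $-\infty$, since $\max(x,-\infty)=x$. It is multiplicatively absorbing, because $x+(-\infty)=-\infty$ by convention. The multiplicative identity is $0$.

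\textbf{Classifying ideals.} Let $I$ be a nonzero ideal and put $n=\min(I\cap\mathbb{N}_0)$, which exists by well-ordering. Absorption gives $n\odot k=n+k\in I$ for every $k\in\mathbb{N}_0$, so $I\supseteq I_n$. Conversely, $I\subseteq I_n$ by minimality of $n$ (and $-\infty\in I$ since ideals contain $0_S$). Hence $I=I_n$.

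Each $I_n$ is indeed an ideal. It is closed under $\max$, and $x+k\ge n$ whenever $k\ge n$ and $x\in\mathbb{N}_0$; products with $-\infty$ give $-\infty$. The chain $I_{-\infty}\subset\cdots\subset I_1\subset I_0=S$ is immediate.

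So $S$ is uniserial, and the ``in particular'' clause of (1) follows at once from Propositions \ref{uniserialisKuratowski} and \ref{uniserialisnucleus}.

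\textbf{Information algebra.} For (2), zerosumfreeness holds because $\max(x,y)=-\infty$ forces $x=y=-\infty$. Entireness holds because $x+y=-\infty$ forces $x=-\infty$ or $y=-\infty$. Also $1_S=0\neq -\infty=0_S$. Together these make $S$ an information algebra.

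\textbf{Austerity.} I take austere to mean that the only subtractive ideals are $\{0\}$ and $H$, equivalently that every nonzero ideal has closure $H$. This is the step needing care: I must show each $I_n$ with $n\ge1$ is not subtractive and that its closure is $S$.

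By Theorem \ref{subtractiveclosureproperties}(a), $\scl_S(I_n)=\{h: \max(h,i)\in I_n \text{ for some } i\in I_n\}$. Taking $i=n$, every $h\in\mathbb{N}_0$ (including $h=0$) satisfies $\max(h,n)\ge n$, so $\max(h,n)\in I_n$. Hence $0\in\scl_S(I_n)$, and since $0$ is the multiplicative unit, $\scl_S(I_n)=S$.

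Thus $S$ is austere. Concretely, $\max(0,n)=n\in I_n$ with $n\in I_n$ but $0\notin I_n$, which exhibits the failure of subtractivity for $n\ge1$.

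\textbf{Multiplicative distributivity.} I apply Proposition \ref{austereandentiremultiplicativedistributivity}(b). Entireness was shown above. For $S^2=S$, note that every $x\in S$ equals $x\odot 0$, so $S\cdot S=S$. The proposition then yields $\scl_S(I)\scl_S(J)=\scl_S(IJ)$.

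As a direct sanity check, $I_mI_n=I_{m+n}$ and $I_{-\infty}I=I_{-\infty}$. So both sides equal $S$ when $I,J$ are nonzero, and equal $\{-\infty\}$ otherwise. No step is expected to be a serious obstacle; the only subtlety is tracking $-\infty$ as the zero.
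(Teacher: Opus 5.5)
Your proposal is correct and follows the paper's proof essentially step for step. You classify the ideals via $n=\min(I\cap\mathbb{N}_0)$, deduce uniseriality, show that each $I_n$ with $n\ge 1$ is not subtractive (the paper uses the witness $n-1$ where you use $0$), and then invoke Proposition~\ref{austereandentiremultiplicativedistributivity}(b); your explicit verification of the hypothesis $S^2=S$ fills a small point the paper leaves implicit.
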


\begin{proof}
(1) Every ideal of $S$ must contain the additive identity $-\infty$. Let $I \neq \{-\infty\}$ be a nonzero ideal and set $n = \min(I \cap \mathbb{N}_0)$. For any $m \in \mathbb{N}_0$ with $m \ge n$, we have $m - n \in \mathbb{N}_0 \subset S$, whence
		\[
		m = (m-n) + n = (m-n) \odot n \in I.
		\] Since addition in $S$ is the maximum operation, taking sums of elements in $I$ generates no element smaller than $n$. Thus $I = \{k \in \mathbb{N}_0 : k \ge n\} \cup \{-\infty\} = I_n$. Conversely, each $I_n$ is clearly closed under $\max$ and under multiplication by elements of $S$, so every $I_n$ is an ideal.
		The chain structure $I_m \subset I_n \iff n < m$ is immediate. 
		Therefore $S$ is uniserial, and by Propositions~\ref{uniserialisKuratowski} and~\ref{uniserialisnucleus}, every uniserial hemiring is both Kuratowski and nucleus.
		
\smallskip \noindent (2) By (1), the ideals of $S$ are $I_{-\infty} = \{-\infty\}$, $I_0 = S$, and $I_n = \{k \in \mathbb{N}_0 : k \ge n\} \cup \{-\infty\}$ for $n \ge 1$. 
		For $n \ge 1$, take $x = n \in I_n$ and $y = n-1 \notin I_n$. 
		Then $x \oplus y = \max(n, n-1) = n \in I_n$, yet $y \notin I_n$. 
		Hence $I_n$ is not subtractive, and the only subtractive ideals of $S$ are the trivial ones $\{-\infty\}$ and $S$. 
		Thus $S$ is an austere semiring. 
		It is clear that $S$ is zerosumfree, since $\max(a,b) = -\infty$ implies $a = b = -\infty$, and entire, since $a + b = -\infty$ implies $a = -\infty$ or $b = -\infty$. 
		Hence $S$ is an information algebra. 
		By Proposition~\ref{austereandentiremultiplicativedistributivity}, the subtractive closure $\scl_S$ distributes over the multiplication of ideals, completing the proof.
\end{proof}

\appendix

\section{Yet another family of counterexamples}\label{appendix:polynomial}

We now construct a broader class of counterexamples to demonstrate that the nucleus property fails for a wide variety of semirings. Specifically, we will prove that the polynomial semiring in three variables over any zerosumfree semiring fails to be a nucleus semiring. Before establishing this result, we develop the necessary preparatory framework concerning graded semirings and the algebraic behavior of homogeneous ideals.

\begin{definition}
	Let $(\Gamma,+)$ be a commutative cancellative monoid. A semiring $S$ is called graded by $\Gamma$ (or $\Gamma$-graded) if there exists a family $\{S_\gamma\}_{\gamma \in \Gamma}$ of additive submonoids of $S$ such that:
	\begin{enumerate}
		\item $S = \bigoplus_{\gamma \in \Gamma} S_\gamma$ (direct sum as additive monoids), meaning every $x \in S$ can be written uniquely as $x = \sum_{\gamma \in \Gamma} x_\gamma$ with $x_\gamma \in S_\gamma$ and only finitely many $x_\gamma \neq 0$;
		\item $S_\alpha \cdot S_\beta \subseteq S_{\alpha+\beta}$ for all $\alpha, \beta \in \Gamma$.
	\end{enumerate}
	For $x \in S$, the elements $x_\gamma \in S_\gamma$ in the unique decomposition $x = \sum_{\gamma \in \Gamma} x_\gamma$ are called the homogeneous components of $x$. An element $x$ is called homogeneous if $x \in S_\gamma$ for some $\gamma \in \Gamma$.
\end{definition}

\begin{definition}
	Let $S = \bigoplus_{\gamma \in \Gamma} S_\gamma$ be a commutative semiring graded by a commutative cancellative monoid $\Gamma$. An ideal $I$ of $S$ is called homogeneous if $I$ is generated by homogeneous elements.
\end{definition}

\begin{theorem}\label{homogeneouscomponents}
	Let $S = \bigoplus_{\gamma \in \Gamma} S_\gamma$ be a commutative semiring graded by a commutative cancellative monoid $\Gamma$. Let $I$ be an ideal of $S$. Then the following are equivalent:
	
	\begin{enumerate}[(a)]
		\item $I = \bigoplus_{\gamma \in \Gamma} (I \cap S_\gamma)$,
		\item for every $x \in I$, all homogeneous components $x_\gamma$ of $x$ belong to $I$,
		\item $I$ is homogeneous.
	\end{enumerate}
\end{theorem}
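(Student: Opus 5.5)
I will prove the cycle (a)$\Rightarrow$(c)$\Rightarrow$(b)$\Rightarrow$(a). The only real work is in (c)$\Rightarrow$(b). There the point is that the homogeneous components of any element of an ideal generated by homogeneous elements can be read off from a representation as a finite sum of products; this uses uniqueness of the decomposition in $S=\bigoplus_{\gamma} S_\gamma$.

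For (a)$\Rightarrow$(c): if $I=\bigoplus_{\gamma}(I\cap S_\gamma)$, then every $x\in I$ is a finite sum of elements of $I\cap S_\gamma$. These are homogeneous elements of $I$, so the set $\bigcup_\gamma (I\cap S_\gamma)$ of homogeneous elements of $I$ generates $I$, and $I$ is homogeneous.

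For (b)$\Rightarrow$(a): the inclusion $\bigoplus_\gamma (I\cap S_\gamma)\subseteq I$ holds because $I$ is closed under finite sums. Conversely, for $x\in I$ the decomposition $x=\sum_\gamma x_\gamma$ has every $x_\gamma\in I\cap S_\gamma$ by (b). Hence $x$ lies in the sum, and the sum is direct since it sits inside the direct sum $\bigoplus_\gamma S_\gamma$.

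For (c)$\Rightarrow$(b), the main step, let $I$ be generated by a set $G$ of homogeneous elements. Since $S$ is a semiring with identity, every $x\in I$ can be written as $x=\sum_{k=1}^n s_k g_k$ with $s_k\in S$ and $g_k\in G\cap S_{\delta_k}$. Decompose each $s_k=\sum_\alpha (s_k)_\alpha$. Then $x=\sum_k\sum_\alpha (s_k)_\alpha g_k$, and each term lies in $S_{\alpha+\delta_k}$ by the grading axiom $S_\alpha S_\beta\subseteq S_{\alpha+\beta}$. Grouping terms by degree gives
\[
y_\gamma=\sum_{(k,\alpha):\,\alpha+\delta_k=\gamma}(s_k)_\alpha g_k\in S_\gamma,
\]
using that each $S_\gamma$ is an additive submonoid, and only finitely many $y_\gamma$ are nonzero. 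By uniqueness of the decomposition of $x$, $x_\gamma=y_\gamma$. Each $y_\gamma$ is a sum of multiples of generators, so it lies in $I$, which proves (b). The obstacle here is bookkeeping. Uniqueness of components in a direct sum of monoids, with no subtraction available, is exactly what the definition provides, so it transfers without cancellation arguments. Cancellativity of $\Gamma$ is not needed for this step.
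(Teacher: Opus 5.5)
Your proof is correct and rests on the same key computation as the paper: write $x=\sum_k s_k g_k$, split each coefficient into homogeneous components, and use $S_\alpha S_\beta\subseteq S_{\alpha+\beta}$ to see that each degree-$\gamma$ part of $x$ is a sum of multiples of generators, hence lies in $I$. The only difference is the direction of the cycle. You prove (a)$\Rightarrow$(c)$\Rightarrow$(b)$\Rightarrow$(a) and place this computation in (c)$\Rightarrow$(b), where you group terms by degree and invoke uniqueness of the decomposition explicitly. The paper proves (a)$\Rightarrow$(b)$\Rightarrow$(c)$\Rightarrow$(a) and uses the same computation to conclude (a) directly.
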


\begin{proof}
	We prove $(a) \Rightarrow (b) \Rightarrow (c) \Rightarrow (a)$.
	
	\medskip
	\noindent\textbf{$(a) \Rightarrow (b)$:}
	Assume $I = \bigoplus_{\gamma \in \Gamma} (I \cap S_\gamma)$. Let $x \in I$. Then by assumption, $x \in \bigoplus_{\gamma \in \Gamma} (I \cap S_\gamma)$, so $x = \sum_{\gamma \in \Gamma} y_\gamma$ with $y_\gamma \in I \cap S_\gamma$. But the decomposition of $x$ into homogeneous components is unique (since the sum is direct), so $x_\gamma = y_\gamma \in I \cap S_\gamma \subseteq I$ for each $\gamma$. Hence all homogeneous components of $x$ lie in $I$.
	
	\medskip
	\noindent\textbf{$(b) \Rightarrow (c)$:}
	Assume that for every $x \in I$, all homogeneous components $x_\gamma$ belong to $I$. Let $H = \{x_\gamma : x \in I, \gamma \in \Gamma\}$, the set of all homogeneous components of elements of $I$. Then $H \subseteq I$ by assumption, and $H$ consists of homogeneous elements. Every $x \in I$ can be written as $x = \sum_{\gamma} x_\gamma$, a sum of elements of $H$. Thus $I$ is generated by the homogeneous elements in $H$, so $I$ is generated by homogeneous elements.
	
	\medskip
	\noindent\textbf{$(c) \Rightarrow (a)$:}
	Assume $I$ is generated by a set $H$ of homogeneous elements. We must show $I = \bigoplus_{\gamma \in \Gamma} (I \cap S_\gamma)$. The inclusion \[\bigoplus_{\gamma \in \Gamma} (I \cap S_\gamma) \subseteq I\] is clear, since each $I \cap S_\gamma \subseteq I$. For the reverse inclusion $I \subseteq \bigoplus_{\gamma \in \Gamma} (I \cap S_\gamma)$, let $x \in I$. Since $S$ is commutative and $I$ is generated by $H$, we can write
	\[
	x = \sum_{k} a_k h_k
	\]
	for some finite collection of $h_k \in H$ and $a_k \in S$. Each $h_k$ is homogeneous, say $h_k \in S_{\delta_k}$. Write $a_k = \sum_{\alpha} a_{k,\alpha}$ as the sum of its homogeneous components with $a_{k,\alpha} \in S_\alpha$. Then
	\[
	a_k h_k = \sum_{\alpha} a_{k,\alpha} h_k,
	\]
	and by the graded multiplication, $a_{k,\alpha} h_k \in S_{\alpha+\delta_k}$. Thus each term $a_{k,\alpha} h_k$ is homogeneous and belongs to $I$ (since $h_k \in I$ and $I$ is an ideal). Hence $x$ is a sum of homogeneous elements of $I$, i.e., $x \in \bigoplus_{\gamma \in \Gamma} (I \cap S_\gamma)$. Thus $I = \bigoplus_{\gamma \in \Gamma} (I \cap S_\gamma)$.
\end{proof}

\begin{remark}
	Theorem \ref{homogeneouscomponents} and its proof are semiring analogues of the corresponding result in graded ring theory. See, for instance, Proposition 9.95 in \cite{Rotman2002}.
\end{remark}

For an $\mathbb{N}_0$-graded semiring $S = \bigoplus_{n \in \mathbb{N}_0} S_n$ and an ideal $A$ of $S$, we denote the component of degree $1$ by $A_1 = A \cap S_1$. Then $(A \cap B)_1 = A_1 \cap B_1$ for any ideals $A, B$ of $S$.

\begin{lemma}\label{componentintersection}
	Let $S = \bigoplus_{n \in \mathbb{N}_0} S_n$ be an $\mathbb{N}_0$-graded commutative semiring and let $I, J$ be ideals of $S$. Then $(I \cap J)_1 = I_1 \cap J_1$.
\end{lemma}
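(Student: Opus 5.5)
The plan is to argue directly from the definition $A_1 = A \cap S_1$ and the associativity and commutativity of set-theoretic intersection. Nothing about gradings or homogeneity (Theorem~\ref{homogeneouscomponents}) is needed, since the degree-one component of an ideal is simply its intersection with the fixed set $S_1$.

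Concretely, I will write
\[
(I \cap J)_1 = (I \cap J) \cap S_1 = (I \cap S_1) \cap (J \cap S_1) = I_1 \cap J_1 .
\]
The middle equality uses $S_1 \cap S_1 = S_1$ together with associativity and commutativity of $\cap$.

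For readers who prefer an element-wise check, I will also add one line of double inclusion. An element $x$ lies in $(I\cap J)_1$ exactly when $x \in I$, $x\in J$ and $x \in S_1$. This holds exactly when $x \in I\cap S_1$ and $x \in J \cap S_1$.

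I do not expect any real obstacle. The only point worth flagging is that $I\cap J$ is indeed an ideal, so the notation $(I\cap J)_1$ is meaningful. This holds because an intersection of ideals is an ideal: it contains $0$ and is closed under addition and absorption.
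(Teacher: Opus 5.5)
Your proof is correct and is exactly the paper's argument: unwind $A_1 = A \cap S_1$ and rearrange the intersections, with $S_1 \cap S_1 = S_1$ giving the middle step. Your element-wise check and the remark that $I \cap J$ is an ideal are harmless additions to the same one-line computation.
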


\begin{proof}
	By definition, $A_1 = A \cap S_1$ for any ideal $A$ of $S$. Hence
	\[
	(I \cap J)_1 = (I \cap J) \cap S_1 = (I \cap S_1) \cap (J \cap S_1) = I_1 \cap J_1. \qedhere
	\]
\end{proof}

\begin{theorem}\label{thm:non-nucleus}
	Let $S_0$ be a zerosumfree semiring and $S = S_0[X,Y,Z]$ be the polynomial semiring graded by total degree. Then $S$ is not a nucleus hemiring.
\end{theorem}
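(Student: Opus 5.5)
The plan is to produce two explicit homogeneous ideals $I,J$ of $S=S_0[X,Y,Z]$ that violate the prenucleus inclusion $\scl_S(I)\cap J\subseteq \scl_S(I\cap J)$, exactly as in Theorem~\ref{nonnucleusthm}. By Proposition~\ref{subsuperdistributivepro}(1) and Proposition~\ref{closureprenucleusgivesnucleus}, this failure shows that $\scl_S$ is not a nucleus. I would take
\[
I=(X,\;X+Y),\qquad J=(Y,\;Z),
\]
both generated by homogeneous elements of degree $1$, so both are homogeneous in the sense of Theorem~\ref{homogeneouscomponents}.

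First, $X\in I$ and $X+Y\in I$, so subtractivity of $\scl_S(I)$ gives $Y\in \scl_S(I)$ (equivalently, Theorem~\ref{subtractiveclosureproperties}(a) with $i=X$). Since $Y\in J$, we get $Y\in \scl_S(I)\cap J$.

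Second, compute degree-one components. By the argument in $(c)\Rightarrow(a)$ of Theorem~\ref{homogeneouscomponents}, every element of $I_1$ has the form $aX+b(X+Y)=(a+b)X+bY$ with $a,b\in S_0$, since only degree-$0$ multipliers contribute in degree $1$. Similarly, every element of $J_1$ has the form $cY+dZ$. An element of $I_1\cap J_1$ must therefore satisfy $a+b=0$, which by zerosumfreeness forces $a=b=0$, so the element is $0$. By Lemma~\ref{componentintersection}, $(I\cap J)_1=I_1\cap J_1=\{0\}$.

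Next, $I\cap J$ is homogeneous: if $x\in I\cap J$, criterion (b) of Theorem~\ref{homogeneouscomponents} puts every component $x_\gamma$ in both $I$ and $J$, hence in $I\cap J$.

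Finally, I show $Y\notin\scl_S(I\cap J)$, which is the main step. Suppose otherwise. By Theorem~\ref{subtractiveclosureproperties}(a) there is $k\in I\cap J$ with $Y+k\in I\cap J$. Taking degree-one components, which is legitimate because the grading decomposition is additive, gives $Y+k_1\in (I\cap J)_1=\{0\}$. Thus $Y+k_1=0$, and zerosumfreeness of $S$ forces $Y=0$. This is a contradiction, since $1\neq 0$ in $S_0$. The ingredients to verify here are:
\begin{enumerate}
\item $S$ inherits zerosumfreeness from $S_0$, by comparing coefficients;
\item the homogeneity of $I\cap J$, so that $(Y+k)_1$ and $k_1$ lie in $I\cap J$.
\end{enumerate}
This gives $Y\in \scl_S(I)\cap J\setminus \scl_S(I\cap J)$, and therefore $S$ is not a nucleus hemiring.

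The variable $Z$ is not essential; $J=(Y)$ would also work. I keep $J=(Y,Z)$ to match the statement's three variables.
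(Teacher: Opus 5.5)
Your proof is correct and is essentially the paper's argument. Both work with homogeneous ideals generated in degree one, use zerosumfreeness to force $(I\cap J)_1=I_1\cap J_1=\{0\}$, and then take degree-one components of a subtraction witness $k,\;Y+k\in I\cap J$. The only difference is the choice of ideals: the paper uses $I=\langle Y,X+Y\rangle$ and $J=\langle Z,X+Z\rangle$ with $X\in\scl_S(I)\cap\scl_S(J)$, whereas your $J=(Y)$ already contains the witness, so only one closure needs computing and $Z$ plays no role. One small correction: going from $\scl_S(I)\cap J\not\subseteq\scl_S(I\cap J)$ to the failure of (N) needs only $J\subseteq\scl_S(J)$, not Proposition~\ref{closureprenucleusgivesnucleus}, which is the converse implication.
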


\begin{proof}
	Consider the homogeneous ideals $I = \langle Y, X+Y \rangle$ and $J = \langle Z, X+Z \rangle$ in $S$. We show that $\scl_S(I \cap J) \neq \scl_S(I) \cap \scl_S(J)$.
	
	First, observe that $X+Y \in I$ and $Y \in I$. Then $X \in \scl_S(I)$ because $X+Y$ and $Y$ are in the subtractive ideal $\scl_S(I)$. Analogously, since $X+Z \in J$ and $Z \in J$, we obtain $X \in \scl_S(J)$. Thus, $X \in \scl_S(I) \cap \scl_S(J)$.
	
	Next, we determine the intersection $(I \cap J)_1$. Since $I$ and $J$ are homogeneous, their intersection $I \cap J$ is also homogeneous by Theorem~\ref{homogeneouscomponents}(b). By Lemma~\ref{componentintersection}, $(I \cap J)_1 = I_1 \cap J_1$. The elements of $I_1$ and $J_1$ are formed via linear combinations of their homogeneous generators by constants in $S_0$:
	\[
	I_1 = \{\, bX + (a+b)Y \mid a,b \in S_0 \,\} \quad \text{and} \quad J_1 = \{\, dX + (c+d)Z \mid c,d \in S_0 \,\}.
	\]
	Equating elements $bX + (a+b)Y = dX + (c+d)Z$ forces $b=d$, $a+b=0$, and $c+d=0$ due to the linear independence of $X, Y, Z$ over $S_0$. Since $S_0$ is zerosumfree, $a+b=0$ implies $a=b=0$, and $c+d=0$ implies $c=d=0$. Hence, $I_1 \cap J_1 = \{0\}$, and consequently $(I \cap J)_1 = \{0\}$.
	
	Finally, suppose for contradiction that $X \in \scl_S(I \cap J)$. By Theorem~\ref{subtractiveclosureproperties}(a), there exists $u \in I \cap J$ such that $X+u \in I \cap J$. Since $I \cap J$ is homogeneous by Theorem~\ref{homogeneouscomponents}(b), taking degree-$1$ components yields $(X+u)_1 \in (I \cap J)_1$, so $X + u_1 = 0$. Since $X$ is a degree-$1$ variable and $u_1 \in S_1$, we write $u_1 = c_1 X + c_2 Y + c_3 Z$ for $c_i \in S_0$, yielding $(1+c_1)X + c_2 Y + c_3 Z = 0$. By the uniqueness of the monomial decomposition in $S$, comparing coefficients gives $1+c_1 = c_2 = c_3 = 0$, which contradicts the assumption that $1 \neq 0$ in $S_0$. Thus $X \notin \scl_S(I \cap J)$. However, $X \in \scl_S(I) \cap \scl_S(J)$, while $X \notin \scl_S(I \cap J)$. Therefore $\scl_S(I \cap J) \neq \scl_S(I) \cap \scl_S(J)$, and $S$ is not a nucleus hemiring.
\end{proof}

\section*{Open questions}

\smallskip \noindent (1) Does every uniserial hemiring satisfy multiplicative distributivity, i.e., \[\scl_H(I)\scl_H(J)=\scl_H(IJ)\] for all ideals $I,J$? 

\smallskip \noindent (2) Is the polynomial hemiring $S_0[X,Y,Z]$ over a zerosumfree semiring $S_0$ a Kuratowski hemiring?

\smallskip \noindent (3) Does the subtractive closure distribute over ideal multiplication in $S_0[X,Y,Z]$?

\end{document}